\documentclass[oneside,11pt]{amsart}

\usepackage{amssymb,amsfonts,amsmath,amsthm}
\usepackage{amscd}
\usepackage{tikz-cd}
\usepackage{mathtools}
\usepackage{lmodern}
\usepackage{microtype}
\usepackage{booktabs,array}
\usepackage[all,arc]{xy}
\usepackage{enumerate}
\usepackage{mathrsfs}
\usepackage[toc,page]{appendix}
\usepackage[left=3cm, right=3cm, bottom=3cm]{geometry}
\usepackage{graphicx}
\usepackage{tabularx}
\usepackage{url}
\usepackage{color}
\usepackage{float}
\usepackage{comment}
\usepackage[colorlinks]{hyperref}
\hypersetup{
bookmarksnumbered,
pdfstartview={FitH},
breaklinks=true,
linkcolor=blue,
urlcolor=blue,
citecolor=blue,
bookmarksdepth=2
}
\usepackage[nameinlink,capitalise,noabbrev]{cleveref}

\allowdisplaybreaks

\newtheorem{thm}{Theorem}[section]
\newtheorem*{thm*}{Theorem}
\newtheorem*{cor*}{Corollary}
\newtheorem*{prop*}{Proposition}
\newtheorem{cor}[thm]{Corollary}
\newtheorem{prop}[thm]{Proposition}
\newtheorem{lem}[thm]{Lemma}

\theoremstyle{definition}

\theoremstyle{remark}
\newtheorem{rem}[thm]{Remark}

\makeatletter
\let\origsection\section
\renewcommand\section{\@ifstar{\starsection}{\nostarsection}}

\newcommand\nostarsection[1]
{\sectionprelude\origsection{#1}\sectionpostlude}

\newcommand\starsection[1]
{\sectionprelude\origsection*{#1}\sectionpostlude}

\newcommand\sectionprelude{%
  \vspace{1em}
}

\newcommand\sectionpostlude{%
  \vspace{1em}
}

\makeatother

\makeatletter
\let\c@equation\c@thm
\makeatother
\numberwithin{thm}{section}
\numberwithin{equation}{section}

\crefname{thm}{theorem}{theorems}
\Crefname{thm}{Theorem}{Theorems}
\crefname{prop}{proposition}{propositions}
\Crefname{prop}{Proposition}{Propositions}
\crefname{lem}{lemma}{lemmas}
\Crefname{lem}{Lemma}{Lemmas}
\crefname{cor}{corollary}{corollaries}
\Crefname{cor}{Corollary}{Corollaries}
\crefname{defn}{definition}{definitions}
\Crefname{defn}{Definition}{Definitions}
\crefname{rem}{remark}{remarks}
\Crefname{rem}{Remark}{Remarks}

\title[Bruzzo--Gra\~na Otero's curve semistability conjecture]
{A Counterexample to the Curve Semistability Conjecture for Higgs Bundles}
\author{Pengfei Huang}

\subjclass[2020]{14H60; 14J60}
\keywords{Bruzzo--Gra\~na Otero conjecture, curve semistability, Higgs bundle,
discriminant, plane quintic}

\begin{document}

\pagenumbering{arabic}

\begin{abstract}
In this paper, we give a counterexample to the Bruzzo--Gra\~na Otero conjecture on the curve semistability for Higgs bundles.  Let $\Sigma$ be a very general smooth plane quintic curve and let $X=\Sigma^{(2)}$ be its second symmetric product. Starting from the tautological bundle $F=\mathcal{O}_\Sigma(1)^{[2]}$, we construct a rank four Higgs bundle $\mathcal{E}_s=(E_s,\theta_s)$ on $X$.  Then, for every smooth projective curve $C$ and every morphism $f:C\to X$, the pullback Higgs bundle $f^*\mathcal{E}_s=(f^*E_s,f^*\theta_s)$ is semistable.  However, on the other hand, $\mathrm{det}(E_s)\cong\mathcal{O}_X$ and $\int_Xc_2(E_s)=10$. Consequently, the discriminant of $E_s$ does not vanish, and $\mathcal{E}_s$ provides the desired counterexample.
\end{abstract}

\maketitle

\section{Introduction}\label{sec:introduction}

Let $X$ be a smooth complex projective variety and let
$\mathcal{E}=(E,\theta)$ be a Higgs bundle of rank $r$ on $X$.  The discriminant of $E$ is defined by
\[
 \Delta(E)=c_2(E)-\frac{r-1}{2r}c_1(E)^2.
\]
The Bruzzo--Gra\~na Otero conjecture asserts that the following two conditions are equivalent:
\begin{enumerate}
\item $\mathcal{E}$ is semistable with respect to some polarization and $\Delta(E)=0$.
\item For every smooth projective curve $C$ and every morphism
      $f:C\to X$, the pullback Higgs bundle $f^*\mathcal{E}$ is semistable.
\end{enumerate}
Here and throughout this paper, if $f:C\to X$ is a morphism from a smooth projective curve and
$\varphi: E\to F\otimes\Omega_X^1$ is an $\Omega_X^1$-valued morphism of vector bundles, we denote by $f^*\varphi$ the contracted pullback
\begin{align}\label{eq:pullback-higgs-field}
 f^*E\xrightarrow{\,f^*(\varphi)\,}
 f^*F\otimes f^*\Omega_X^1
 \xrightarrow{\,1\otimes df\,}f^*F\otimes K_C.
\end{align}
In particular, $f^*\theta$ denotes the pullback Higgs field, and $f^*\mathcal{E}=(f^*E,f^*\theta)$ denotes the pullback Higgs bundle.  

Simpson called condition~(2) pluri-semistability for smooth proper Deligne--Mumford stacks and the corresponding statement on the vanishing of  all Chern classes the ``Bruzzo conjecture''
\cite[Definition~9.2 and Conjecture~9.5]{Sim11}.  On smooth projective varieties, the curve conditions agree, while the conclusions differ in general but agree on surfaces with trivial determinant, as in our example.
The twist-invariant discriminant formulation was later stated explicitly by Biswas, Bruzzo and Gurjar \cite[Conjecture~1.2]{BBG19}.  It was called Bruzzo's conjecture by Lanza and Lo Giudice \cite{LLG17}, whereas a recent overview uses the name Bruzzo--Gra\~na Otero conjecture \cite[Conjecture~1]{Cap26a}.  We follow this terminology and refer to the implication from (2) to (1) as Bruzzo--Gra\~na Otero's curve semistability conjecture, or simply the Bruzzo--Gra\~na Otero conjecture.  The implication from (1) to (2) was proved in \cite[Theorem~4.7]{BGO11} and also follows from Simpson's pullback-compatible tensor equivalence \cite[Corollary~3.10 and the discussion following it]{Sim92}, applied to the endomorphism Higgs bundle.  The converse is valid in rank two \cite[Theorem~4.8]{BGHR23}, and it is also known for several classes of varieties and under additional assumptions on Higgs Grassmannians \cite{BLG16,BLLG19,BP22,BC23}.  Recent works still formulate the general implication as an open problem \cite[Section~2]{BCGO26}, \cite[Conjecture~1]{Cap26a}, and \cite[Conjecture~2.5]{Cap26b}; see also Mestrano and Simpson \cite[Section~12, p.~121]{MS16}.

In this paper, we show that the converse fails in rank four.  We briefly describe the construction (see Section~\ref{sec:construction} for the details).  Let $\Sigma\subset\mathbb{P}^2$ be a very general smooth plane quintic curve, and set $S=\mathcal{O}_\Sigma(1)$, $X=\Sigma^{(2)}$, and $F=S^{[2]}$.  Multiplication on length two subschemes, together with the isomorphism $S^2\cong K_\Sigma$, gives an $\Omega_X^1$-valued symmetric form $\beta: \mathrm{Sym}^2(F)\longrightarrow\Omega_X^1$, or equivalently its adjoint morphism $\psi:F\longrightarrow F^*\otimes\Omega_X^1$, while the evaluation sequence of $F$ produces, for a suitable section $s\in H^0(\Sigma,S)\cong H^0(X,F)$, an extension
\[
 0\longrightarrow F^*\longrightarrow E_s\longrightarrow F
 \longrightarrow0.
\]
The Higgs field $\theta_s:E_s\to E_s\otimes\Omega_X^1$ is the composite
\[
E_s\twoheadrightarrow F\xrightarrow{\psi}
 F^*\otimes\Omega_X^1\hookrightarrow E_s\otimes\Omega_X^1.
\]

Our main result is the following.

\begin{thm}\label{thm:main}
There exist a smooth complex projective surface $X$ and a rank four Higgs
bundle $\mathcal{E}_s=(E_s,\theta_s)$ on $X$ with the following properties:
\begin{itemize}
\item[(1)] For every smooth projective curve $C$ and every morphism
      $f:C\to X$, the pullback Higgs bundle $f^*\mathcal{E}_s$ is semistable.
\item[(2)] The underlying vector bundle $E_s$ satisfies
      \[
       \mathrm{det}(E_s)\cong\mathcal{O}_X,
       \qquad \int_Xc_2(E_s)=10.
      \]
\end{itemize}
In particular, $\Delta(E_s)\neq0$ in $H^4(X,\mathbb{Q})$.
\end{thm}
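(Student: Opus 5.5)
The proof splits into the Chern class computation (2), which is routine, and the curve semistability (1), which carries the real content.

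\textbf{Part (2).} Since $E_s$ is an extension of $F$ by $F^*$, we get $\det(E_s)\cong\det(F^*)\otimes\det(F)\cong\mathcal{O}_X$ at once. By the Whitney formula,
\[
c(E_s)=c(F^*)c(F),
\qquad c_2(E_s)=2c_2(F)-c_1(F)^2 .
\]
The Chern classes of the tautological bundle $F=S^{[2]}$ on $\Sigma^{(2)}$ are classical. Write $x$ for the class of $\{p\}+\Sigma$ and $\delta$ for half the diagonal. Then
\[
c_1(F)=dx-\delta,
\qquad c_2(F)=\binom{d}{2}\ \text{(as a number)},
\]
where $d=\deg S=5$ and $g=6$. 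We use $x^2=1$, $x\delta=1$ and $\delta^2=1-g$, so that
\[
c_1(F)^2=d^2-2d+1-g=25-10-5=10,
\qquad c_2(F)=\binom{5}{2}=10 .
\]
Hence $\int_X c_2(E_s)=20-10=10$. Since $c_1(E_s)=0$, this gives $\Delta(E_s)=c_2(E_s)\neq0$.

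\textbf{Part (1).} Fix $f:C\to X$. We may assume $f$ is nonconstant, since a constant map pulls back to a trivial bundle with zero Higgs field. The pullback $f^*E_s$ has degree $0$, so we must show that no $f^*\theta_s$-invariant subsheaf $G\subset f^*E_s$ has positive degree.

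Invariant subsheaves interact with the filtration $f^*F^*\subset f^*E_s$ as follows. The field $f^*\theta_s$ is nilpotent: it kills $f^*F^*$ and maps onto $f^*F^*\otimes K_C$ through $f^*\psi$. Let $G$ be invariant, set $G_1=G\cap f^*F^*$, and let $G_2$ be the image of $G$ in $f^*F$. Then $f^*\psi(G_2)\subset G_1\otimes K_C$.

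I would control $\deg G=\deg G_1+\deg G_2$ in three steps.
\begin{enumerate}[(a)]
\item Show that $F$ restricted to any curve is nef, being globally generated as a quotient of $H^0(S)\otimes\mathcal{O}$. It follows that $f^*F^*$ has no subsheaf of positive degree, so $\deg G_1\le0$, and the quotients of $f^*F$ have nonnegative degree.
\item Bound positive-degree subsheaves of $f^*F$ using the form $\beta$. A subsheaf $G_2$ of positive degree must pair via $f^*\psi$ into $G_1\otimes K_C$, and $G_1$ has very negative degree. This would force $G_2$ to be ``isotropic'' for $f^*\beta$ in a strong sense. One then argues that the resulting degree gain $\deg G_2$ is compensated by $-\deg G_1$.
\item Use the extension class to exclude the remaining case $G_1=0$, where $G$ maps isomorphically onto some $G_2\subset f^*F$. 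Such a $G_2$ must be $f^*\psi$-isotropic, i.e. $f^*\psi(G_2)=0$, and it must lift to a splitting of the pulled-back extension. Here the choice of $s$ and the very general hypothesis on $\Sigma$ enter. The extension is the evaluation sequence twisted by $s$, and a lifting would produce a section relation that fails for all curves.
\end{enumerate}

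\textbf{Main obstacle.} The hard step is (b)--(c): showing that the geometry of $\Sigma^{(2)}$ forbids any subsheaf of $f^*F$ that is simultaneously destabilizing and $\beta$-isotropic along the image curve. I would first try to identify the kernel of $\psi$ pointwise. For a reduced point $p+q$, the multiplication form on $S_p\oplus S_q$ with values in $K_p\oplus K_q$ is nondegenerate. I expect it to degenerate only along the diagonal. For curves not contained in the diagonal, I would then show that $f^*\psi$ is generically injective with controlled zeros, namely $\deg\det f^*\psi$ equal to the ramification of $df$ plus the intersection with the diagonal. This gives the inequality
\[
\deg G_1\le -\deg G_2 .
\]
Curves inside the diagonal, and the case $G_1=0$, would be treated separately, using that a very general quintic contains no special configurations such as lines meeting it in special ways.
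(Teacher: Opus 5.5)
Your part (2) is correct. The intersection numbers $x^2=1$, $x\delta=1$, $\delta^2=1-g$ give $c_1(F)^2=10$, and $c_2(F)=\binom{5}{2}$ agrees with the paper's secant-map count, so $\int_X c_2(E_s)=2\cdot 10-10=10$.

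Part (1), however, is not proved, and your plan rests on a wrong picture of where $f^*\psi$ degenerates. What governs $u_f=f^*\psi$ is the contraction of $\beta_x$ with the tangent direction of the curve, not $\beta_x$ itself. At $p+q$ off the diagonal the contracted form is $v_pz_p^2+v_qz_q^2$, which is degenerate exactly when $v$ is tangent to a coordinate curve $X_p=\{p+q\mid q\in\Sigma\}$. Consequences:
\begin{itemize}
\item The degeneracy locus is a divisor $\mathscr{R}\cong\Sigma\times\Sigma$ in $\mathbb{P}(T_X)$, not the diagonal of $X$.
\item The zeros of $\det u_f$ come from zeros of $df$ and tangencies with coordinate curves, not from meeting $j(\Sigma)$.
\item For $f=i_p$, a curve not in the diagonal, $u_f$ has generic rank one, contradicting your plan to show $f^*\psi$ generically injective off the diagonal.
\end{itemize}
The missing lemma is the classification: $u_f$ has generic rank one only when $f$ factors through some $X_p$ or through $j(\Sigma)$. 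The coordinate curves then need their own degree check (kernel of degree $-m$, saturated image of degree $-5m$).

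Even a correct count of zeros of $\det u_f$ cannot give $\deg G_1+\deg G_2\le 0$. For $G_2$ of rank one with $u_f(G_2)\subset G_1\otimes K_C$, it only bounds $\deg G_2-\deg G_1$ by $2g_C-2$. The mechanism that works in generic rank two is orthogonality. If $\bar I$ is the saturation of $G_2$, then the saturation of $G_1$ is $(V/\bar I^\perp)^*$. Hence $\deg G_1+\deg G_2\le\deg\bar I+\deg\bar I^\perp-\deg V\le0$ whenever $\bar I\neq\bar I^\perp$.

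Your split between (b) and (c) also fails, because the extension class is needed even when $G_1\neq0$. On the diagonal, $\ker(j^*\psi)$ is the jet subline $S^3\subset J^1S=j^*F$ of degree $15$, and the saturated image is $S^{-1}$ of degree $-5$. In the split extension, $S^{-1}\oplus S^3$ and $S^{-1}\oplus j^*F$ are therefore Higgs subsheaves of degrees $10$ and $15$. So the compensation you hope for in (b) is false without the extension class.

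The paper rules these out, together with the case $G_1=0$, $G_2\subset S^3$, as follows:
\begin{itemize}
\item It identifies the restriction of $j^*e_s$ to $S^3$, pushed out to $(S^3)^*$, with the bundle $P_s$. This is the pushout of the twisted conormal sequence $0\to S^{-4}\to\Omega^1_{\mathbb{P}^2}(1)|_\Sigma\to S^3\to0$ by multiplication by $s$.
\item It chooses the line $\{s=0\}$ generally, using gonality four, Noether's description of the $g^1_4$'s, and polar curves, so that $P_s$ is semistable.
\item Narasimhan--Seshadri keeps $P_s$ semistable on finite covers $C\to\Sigma$.
\end{itemize}

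In generic rank two, the isotropic case $\bar I=\bar I^\perp$ is also handled through the extension. The restriction pushout is an elementary modification of $M_C\subset W\otimes\mathcal{O}_C$ of colength $q=\deg(V/\bar I)$, which gives $\deg G_2\le q$ and hence $\deg G_1+\deg G_2\le0$. The degenerate case $\bar s=0$ forces $f^*\alpha_s=0$, which simplicity of $J(\Sigma)$ excludes. That, not a condition on special lines, is where ``very general'' enters.

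Your steps (b) and (c) state the desired conclusions without these ingredients, and without the general obstruction $(q_J)_*(i_I^*e_f)$ for a possibly nonsaturated image $I$. So part (1) remains unproved.
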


Part~(1) implies, in particular, that $\mathcal{E}_s$ is semistable with
respect to every polarization on $X$.  Indeed, if $H$ is ample and
$\mathcal{G}\subset E_s$ is a saturated $\theta_s$-invariant subsheaf with
$\mu_H(\mathcal{G})>\mu_H(E_s)$, then the restriction of $\mathcal{G}$ to a
general smooth curve in $|mH|$, for $m\gg0$, would destabilize the
corresponding pullback Higgs bundle, contrary to part~(1).  The main point is
the quantifier in part~(1): for an arbitrary morphism $f:C\to X$, let
$V=f^*F$.  A Higgs subsheaf of $f^*\mathcal{E}_s$ determines subsheaves
$I\subset V$ and $J\subset V^*$ satisfying
$(f^*\psi)(I)\subset J\otimes K_C$, but the image $I$ need not be
saturated, and the corresponding subsheaf exists only when a restriction pushout of the extension class vanishes.  It is therefore essential to retain both the nonsaturated image and this obstruction.

To analyze this criterion, the possible rank drop of the contracted symmetric morphism $u_f:=f^*\psi$ is governed by a divisor in $\mathbb{P}(T_X)$, for $\mathbb{P}(T_X)$ parametrizing one-dimensional subspaces of tangent spaces.  More precisely, contraction of $\beta_x$ with a nonzero tangent vector $v\in T_xX$ gives a symmetric form $\beta_{x,v}\in\mathrm{Sym}^2(F_x^*)$, and the equation $\mathrm{det}(\beta_{x,v})=0$ defines the root divisor of $\beta$.  This divisor is smooth and isomorphic to
$\Sigma\times\Sigma$, and its projection to $X$ is the double cover $\Sigma\times\Sigma\to\Sigma^{(2)}$. Consequently, if $f:C\to X$ is nonconstant and $u_f$ has generic rank one, then $f(C)$ is either a coordinate curve or the diagonal.  Direct degree estimates control the coordinate curves, while on the diagonal the relevant restriction pushout class is represented by an extension whose middle term is a rank two bundle $P_s$ of degree zero, and we choose $s$ so that $P_s$ is semistable.   When $u_f$ has generic rank two, the only remaining case is an isotropic line, and the extension class together with the simplicity of the Jacobian of $\Sigma$ excludes a destabilizing subsheaf.

\begin{rem}\label{rem:history}
The argument for the Chern-class vanishing statement for Higgs-numerically flat bundles in \cite[Corollary~3.2]{BGO07} ultimately uses the base change compatibility for absolute Higgs Grassmannians asserted in \cite[Step 1 in the proof of Theorem 1.3]{BHR06}.  This compatibility fails for the Higgs Grassmannians associated to pullback Higgs bundles in the sense of \eqref{eq:pullback-higgs-field}, since contraction by $df$ may produce new invariant quotients.  For example, let $X=A\times B$ be a product of elliptic curves, let $0\neq\alpha\in H^0(A,\Omega_A^1)$, and equip $\mathcal{O}_X^2$ with the Higgs field
\[
 \theta=\begin{pmatrix}p_A^*\alpha&0\\0&0\end{pmatrix}.
\]
Its absolute rank one Higgs Grassmannian consists of the two eigenline sections, whereas the pullback Higgs field along a fiber of $p_A$ is zero, so every line quotient is invariant.  Thus the cited base change argument does not apply to condition~(2); our proof uses neither Higgs numerical flatness nor this compatibility and checks every pullback Higgs bundle directly.  The rank two result cited above is proved by a separate argument and is unaffected.

We also point out that \cite[Introduction]{BLG16} states that counterexamples had been found and refers to unpublished work that later appeared as \cite{BBG19}.  The latter paper instead states the same assertion as an open conjecture and does not contain a counterexample.
\end{rem}

The paper is organized as follows.  In Section \ref{sec:construction}, we give the explicit construction of the Higgs bundle and compute its Chern classes.  In Section \ref{sec:curves}, we give an exact criterion for Higgs subsheaves of pullback Higgs bundles, classify the curves for which the contracted morphism $u_f=f^*\psi$ has generic rank one, and choose the extension class by studying its restriction to the diagonal. Finally, in Section \ref{sec:semistability}, we prove the semistability of every pullback Higgs bundle and complete the proof of Theorem \ref{thm:main}.

\vspace{5mm}

\textbf{Acknowledgements}.
The author is grateful to Ugo Bruzzo and Beatriz Gra\~na Otero
for their interest in this work and for their helpful comments
on the terminology. The author would also like to acknowledge the assistance of OpenAI's ChatGPT (version 5.6 Plus) in performing repeated calculations and testing a wide range of examples. This process eventually led to the counterexample constructed in this paper and to the observation that the plane quintic is the lowest-degree plane curve for which the present tautological construction can work. ChatGPT also assisted the author in revising the manuscript into its present form.

\section{The explicit construction}\label{sec:construction}

In this section, we first recall two results about plane quintics that will be used in the construction.  The simplicity assertion is a special case of Ciliberto and van der Geer \cite[Corollary~1.2]{CvG92}, while the description of the degree four pencils is Noether's theorem \cite[Theorem~2.1]{Har02} (see also \cite[Theorem~2.3.1]{Nam84}).

\begin{lem}\label{lem:plane-quintic}
A very general smooth plane quintic has a simple Jacobian.  Every smooth plane quintic $\Sigma$ has gonality four, and every base-point-free line bundle $L\in\mathrm{Pic}^4(\Sigma)$ is of the form $L\cong\mathcal{O}_\Sigma(1)(-p)$ for a unique point $p\in\Sigma$, with $|L|$ given by the pencil induced by projection from $p$.
\end{lem}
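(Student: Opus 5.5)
The lemma has three assertions, and I will treat each by reduction to a classical result rather than by a new argument.

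\emph{Simplicity.} I would apply Ciliberto--van der Geer \cite[Corollary~1.2]{CvG92}: the Jacobian of a very general member of a suitable family of curves is simple. The plan is to check that smooth plane quintics satisfy its hypotheses, namely that the family is large enough (for example, it dominates enough moduli or has sufficiently big monodromy on $H^1$). Since $|\mathcal{O}_{\mathbb{P}^2}(5)|$ is a linear system, its smooth locus is irreducible. The locus of curves with non-simple Jacobian is then a countable union of proper closed subsets, and a very general point avoids all of them. I expect this step to be essentially citation. The only thing to confirm is that the cited corollary covers plane curves of degree $5$; this is where I would look first, since for some small degrees a special argument might be needed.

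\emph{Gonality.} Set $g=6$, by the genus formula $(d-1)(d-2)/2$. Projection from a point $p\in\Sigma$ gives a pencil of degree $4$, so the gonality is at most $4$. For the lower bound, suppose $\Sigma$ carried a $g^1_k$ with $k\le 3$. For a divisor $D$ in it, geometric Riemann--Roch with $K_\Sigma=\mathcal{O}_\Sigma(2)$ says the points of $D$ impose at most $k-1$ conditions on conics. But any $k\le 3$ points (even infinitely near, on the smooth curve) impose independent conditions on conics, a contradiction.

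\emph{Pencils of degree four.} Let $L\in\mathrm{Pic}^4(\Sigma)$ be base-point-free with $h^0(L)\ge2$, and let $D\in|L|$. By Riemann--Roch, $h^0(K-D)=h^0(L)-4+6-1\ge 3$. Hence the four points of $D$ impose at most $3$ conditions on the $6$-dimensional space of conics. Four points fail to impose independent conditions on conics exactly when at least $4$ of them are collinear; since $\Sigma$ has degree $5$, this forces $D$ to lie on a line $\ell$. Then $\ell\cdot\Sigma=D+p$, so $L\cong\mathcal{O}_\Sigma(1)(-p)$, and the pencil is cut out by lines through $p$, that is, it is projection from $p$. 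Uniqueness of $p$ follows from $\mathcal{O}(1)(-p)\cong\mathcal{O}(1)(-q)$, which would give $p\sim q$ and hence $p=q$ because $g>0$. Finally, $h^0(L)=2$ exactly: $h^0(L)\ge 3$ would give a degree-$4$ map to $\mathbb{P}^2$ and contradict the computation above. This is Noether's theorem \cite[Theorem~2.1]{Har02}, which I would cite.
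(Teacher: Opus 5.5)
Your proposal is correct, and it follows a partly different route from the paper's. The simplicity step is the same as the paper's: it applies \cite[Corollary~1.2]{CvG92} directly to $\mathbb{P}^2$ with $\mathcal{O}_{\mathbb{P}^2}(5)$, so your hedging about monodromy is not needed.

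For the gonality and the degree-four pencils, the two arguments diverge. The paper treats Noether's theorem as a black box. It passes to a general two-dimensional subspace of $H^0(\Sigma,L)$, which is base-point-free and hence a minimal pencil, and invokes \cite[Theorem~2.1]{Har02} to identify that pencil with projection from a point. From this it reads off both $L\cong\mathcal{O}_\Sigma(1)(-p)$ and $h^0(\Sigma,L)=2$; gonality four is left implicit in the same citation. You instead reprove the needed case from geometric Riemann--Roch, using that the canonical series $K_\Sigma=\mathcal{O}_\Sigma(2)$ is cut out by conics. A scheme of length at most three always imposes independent conditions on conics, which gives gonality at least four. A length-four divisor imposing at most three conditions must lie on a line. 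This works on any $D\in|L|$ directly, so you need no subpencil reduction, and it makes the lemma self-contained. The price is that the argument is written for quintics only, although the same argument with curves of degree $d-3$ is the classical proof for all $d$.

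Two small points should be tightened.
\begin{enumerate}
\item Base-point-freeness in positive degree already forces $h^0(L)\ge 2$, so this is not an extra hypothesis.
\item ``A degree-4 map to $\mathbb{P}^2$'' is not by itself the reason for $h^0(L)=2$. State it this way: $h^0(L)\ge3$ gives $h^0(K_\Sigma-D)\ge4$, so $D$ would impose at most two conditions on conics. But a length-four subscheme of a line imposes exactly three, since every conic through it contains the line. Alternatively, $L(-q)$ would give a $g^1_3$, contradicting gonality four.
\end{enumerate}
Finally, since $D$ may be nonreduced, either cite the scheme-theoretic form of the collinearity criterion or take $D$ general, hence reduced, which you may because $|L|$ is base-point-free.
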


\begin{proof}
The first assertion follows by applying \cite[Corollary 1.2]{CvG92} to $\mathbb{P}^2$ with the very ample line bundle $\mathcal{O}_{\mathbb{P}^2}(5)$.  For the remaining assertions, let $L\in\mathrm{Pic}^4(\Sigma)$ be base-point-free, a general
two-dimensional subspace of $H^0(\Sigma,L)$ is base-point-free and defines a minimal pencil, whereas every minimal pencil on a smooth plane quintic is induced by projection from a point of the curve by Noether's theorem \cite[Theorem 2.1]{Har02}.  Consequently, $L\cong\mathcal{O}_\Sigma(1)(-p)$ and $h^0(\Sigma,L)=2$, and the point $p$ is unique because $\mathcal{O}_\Sigma(p-p')\cong\mathcal{O}_\Sigma$ for $p,p'\in\Sigma$ implies $p=p'$.
\end{proof}

Fix a very general smooth plane quintic
$\Sigma\subset\mathbb{P}^2$ as in Lemma \ref{lem:plane-quintic}, and let $S=\mathcal{O}_\Sigma(1)$ and $X=\Sigma^{(2)}$, so that $S^2\cong K_\Sigma$.  Let $F=S^{[2]}$ be the rank two tautological bundle on $X$, whose fiber over a length two effective divisor $Z$ is $F_Z=H^0(Z,S|_Z)$, and denote $H^0(\Sigma,S)$ and $\mathrm{det}(F)$ by $W$ and $N$, respectively.  Choose a nonzero element of $\mathrm{det}(W)$, which identifies the constant line bundle $\mathrm{det}W\otimes\mathcal{O}_X$ with $\mathcal{O}_X$.  We use the standard tautological bundle convention associated to the universal divisor; see \cite[Section 1.2]{Kru20}.  The Chern numbers in the following lemma also follow from \cite[Lemmas 2.3 and 2.4]{Kru25}, but we include the direct geometric argument because it also identifies the secant morphism and proves the ampleness of $N$.

\begin{lem}\label{lem:tautological}
There is a short exact sequence
\begin{align}\label{eq:evaluation}
 0\longrightarrow N^{-1}\longrightarrow W\otimes\mathcal{O}_X
 \longrightarrow F\longrightarrow0.
\end{align}
Moreover, $F$ is nef, $N$ is ample, $N^2=10$, and $\int_Xc_2(F)=10$.
\end{lem}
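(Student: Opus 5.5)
Since $S=\mathcal{O}_\Sigma(1)$ is globally generated, so is $F$. The plan is to build the evaluation sequence from the kernel of the surjection $W\otimes\mathcal{O}_X\to F$, identify that kernel with $N^{-1}$, and then compute the Chern numbers through the induced secant map to $\mathbb{P}(W^*)^*$.

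\textbf{Surjectivity and the kernel.} The evaluation map $W\otimes\mathcal{O}_X\to F$ sends $\sigma\in W$ to $\sigma|_Z\in H^0(Z,S|_Z)$. It is surjective at every $Z\in X$ because a plane quintic contains no three collinear points of a length two subscheme failing to impose independent conditions on lines. Concretely, any length two subscheme of $\mathbb{P}^2$ imposes two independent conditions on linear forms, so $W\to F_Z$ is onto. Hence the kernel $K$ is a line bundle, since $\dim W=3$ and $\mathrm{rk}F=2$. Taking determinants gives $K\cong\det(W\otimes\mathcal{O}_X)\otimes\det(F)^{-1}\cong N^{-1}$, using the chosen trivialization of $\det W$. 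This proves \eqref{eq:evaluation}. Geometrically, the fiber $K_Z\subset W$ is the line of linear forms vanishing on $Z$, that is, the equation of the secant line spanned by $Z$.

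\textbf{Positivity.} $F$ is a quotient of a trivial bundle, so it is nef. The inclusion $N^{-1}\subset W\otimes\mathcal{O}_X$ defines the secant morphism $\phi:X\to\mathbb{P}(W)\cong(\mathbb{P}^2)^\vee$, $Z\mapsto\langle Z\rangle$, with $\phi^*\mathcal{O}(1)=N$. This morphism is finite: a line meets $\Sigma$ in five points, so its fiber is the finite set of length two subschemes of the degree five divisor $\ell\cap\Sigma$. The pullback of an ample bundle under a finite morphism is ample, so $N$ is ample.

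\textbf{Chern numbers.} $N^2=\deg\phi$, which is the number of length two subschemes $Z$ of a general line section. A general line meets $\Sigma$ in $5$ reduced points, giving $\binom52=10$, so $N^2=10$. From \eqref{eq:evaluation}, $c(F)\,c(N^{-1})=1$, hence $c(F)=(1-c_1(N))^{-1}=1+c_1(N)+c_1(N)^2$. Therefore $c_2(F)=N^2$ and $\int_X c_2(F)=10$.

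The main point to check carefully is that $\phi$ is a morphism of degree exactly $10$. Finiteness and the count both reduce to the fact that a general line is transverse to $\Sigma$, and to correctly identifying the fiber of $\phi$ with subschemes of $\ell\cap\Sigma$, including nonreduced $Z$ for tangent lines, which do not affect the general count.
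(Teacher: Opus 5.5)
Your proof is correct and takes essentially the same route as the paper: surjectivity of evaluation because linear forms separate length two subschemes, identification of the kernel with $N^{-1}$ via determinants, and a finite secant morphism $X\to(\mathbb{P}^2)^*$ of degree $\binom{5}{2}=10$ with $N$ the pullback of $\mathcal{O}(1)$. The only difference is that you get $c_2(F)=c_1(N)^2$ from the Whitney formula on $X$, while the paper writes $F\cong\varphi^*Q$ and uses that $c_2(Q)$ is a point class, which is the same computation.

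Two small clean-ups are needed. First, global generation of $S$ alone does not imply global generation of $F=S^{[2]}$; the needed input is that $S$ separates length two subschemes, which your next paragraph supplies. Second, the sentence about ``three collinear points'' is garbled and should be replaced by the concrete statement that follows it.
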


\begin{proof}
Since $S$ separates length two subschemes, its evaluation morphism is surjective, and taking determinants after the preceding trivialization identifies its rank one kernel with $N^{-1}$, which proves \eqref{eq:evaluation}.

Consider the secant morphism
\begin{align*}
   \varphi:X&\longrightarrow(\mathbb{P}^2)^*,\\
 p+q&\longmapsto\overline{pq}, 
\end{align*}
where $2p$ is mapped to the tangent line $T_p\Sigma$.  For a line $\ell$, the fiber over $\ell$ parametrizes the length two subschemes of the Cartier divisor $\Sigma\cap\ell$.  If $p$ occurs in this divisor with multiplicity $m$, then, after choosing a uniformizer, its local ring at $p$ is
$\mathcal{O}_{\Sigma,p}/I_{\Sigma\cap\ell,p}\cong\mathbb{C}[t]/(t^m)$, which contains a unique subscheme of length $k$ supported at $p$ for each $0\leq k\leq m$.  Since there are only finitely many distributions of total length two among the support points of $\Sigma\cap\ell$, every fiber has finitely many closed points and is therefore zero-dimensional.  Thus
$\varphi$ is quasi-finite, and its properness implies that it is finite.  A transverse line meets $\Sigma$ in five distinct points, and hence $\mathrm{deg}\varphi=\binom{5}{2}=10$.

Let $Q$ be the universal rank two quotient bundle on
$(\mathbb{P}^2)^*$.  The fiberwise description gives
$F\cong\varphi^*Q$ and $N\cong\varphi^*\mathcal{O}(1)$, so $F$ is globally generated and hence nef, while $N$ is ample and $N^2=10$.  Since $c_2(Q)$ is the class of a point, one also obtains $\int_Xc_2(F)=\mathrm{deg}\varphi=10$.
\end{proof}

For general discriminant formulas for tautological bundles on symmetric
products of curves, see \cite[Theorem~1.1]{Kru25}.

The cotangent bundle of $X$ admits the tautological description
$\Omega_X^1\cong(K_\Sigma)^{[2]}$.  Indeed, for every $Z\in X$ one has
$T_ZX=H^0(\mathcal{O}_Z(Z))$, whose dual is identified with
$H^0(K_\Sigma|_Z)$ by the residue pairing, and these identifications
globalize over the universal divisor.  Multiplication on each length two
subscheme, together with $S^2\cong K_\Sigma$, therefore defines an $\Omega_X^1$-valued symmetric form $\beta$, or equivalently its adjoint morphism $\psi$:
\begin{align}
 \beta&:\mathrm{Sym}^2(F)\longrightarrow\Omega_X^1,
 \label{eq:beta}\\
 \psi&:F\longrightarrow F^*\otimes\Omega_X^1,
 \qquad \psi(v)(w)=\beta(v,w).
 \label{eq:psi}
\end{align}

After trivializing $S$ on a length two subscheme $Z$, the fiber of $\beta$ is the multiplication map
$\mathrm{Sym}^2(H^0(\mathcal{O}_Z))\to H^0(\mathcal{O}_Z)$, which is surjective because multiplication by $1$ reaches every element. Consequently, $\beta$ is pointwise surjective and its transpose
\begin{align}\label{eq:tangent-injection}
 \beta^\vee:T_X\longrightarrow\mathrm{Sym}^2(F^*)
\end{align}
is pointwise injective.

For $s\in W$, let $\alpha_s=\beta(s,s)\in H^0(X,\Omega_X^1)$, and let $a:X\to J(\Sigma)$ be a degree two Abel map.  Then $\alpha_s=a^*\lambda_s$, where $\lambda_s$ is the translation invariant one form on $J(\Sigma)$ corresponding to $s^2\in H^0(\Sigma,K_\Sigma)$, because the differential of $a$ is dual to the restriction morphism $H^0(K_\Sigma)\to H^0(K_\Sigma|_Z)$.

We now construct the extension, using the canonical isomorphism
$F^*\otimes N\cong F$ for the rank two bundle $F$, together with the identities $H^0(X,F)=W$ and $H^0(X,F^*)=0$.  Indeed, the ampleness of $N$ gives $H^0(X,N^{-1})=0$, while Serre duality and Kodaira vanishing give $H^1(X,N^{-1})^*=H^1(X,K_X\otimes N)=0$, so taking cohomology in
\eqref{eq:evaluation} yields $H^0(X,F)=W$.  On the other hand, a section of $F^*$ would determine a fixed functional in $W^*$ annihilating the kernel of $W\to F_Z$ for every $Z\in X$; these kernel lines are represented by the equations of the secant and tangent lines to $\Sigma$ and span $W$, so this functional must vanish.

Applying $\mathrm{Hom}(\bullet,F^*)$ to \eqref{eq:evaluation} and using $H^0(X,F^*)=0$, we obtain an injective connecting homomorphism
\[
 \delta:W=H^0(X,F^*\otimes N)
 \longrightarrow\mathrm{Ext}^1_X(F,F^*).
\]
For a nonzero section $s\in W$, let $e_s=\delta(s)$ and denote the corresponding extension by
\begin{align}\label{eq:extension}
 0\longrightarrow F^*\longrightarrow E_s\longrightarrow F
 \longrightarrow0.
\end{align}
Equivalently, this extension is the pushout of \eqref{eq:evaluation} by the homomorphism $N^{-1}\to F^*$ corresponding to $s$.

The adjoint morphism $\psi$ in \eqref{eq:psi} defines the Higgs field $\theta_s:E_s\longrightarrow E_s\otimes\Omega_X^1$ as the composite
\begin{align}\label{eq:higgs-field}
 E_s\twoheadrightarrow F\xrightarrow{\psi}
 F^*\otimes\Omega_X^1\hookrightarrow E_s\otimes\Omega_X^1.
\end{align}
Since the image of $\theta_s$ is contained in $F^*\otimes\Omega_X^1$ and its restriction to $F^*$ vanishes, one has $\theta_s\wedge\theta_s=0$, and hence $\mathcal{E}_s=(E_s,\theta_s)$ is a Higgs bundle.

\begin{prop}\label{prop:chern}
The determinant and the second Chern class of $E_s$ satisfy
\[
 \mathrm{det}(E_s)\cong\mathcal{O}_X,
 \qquad \int_Xc_2(E_s)=10.
\]
In particular, $\Delta(E_s)\neq0$ in $H^4(X,\mathbb{Q})$.
\end{prop}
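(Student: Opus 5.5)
We compute the Chern classes of $E_s$ from the extension \eqref{eq:extension}. Whitney's formula gives
\[
 c(E_s)=c(F^*)\,c(F).
\]
Writing $c_1=c_1(F)=c_1(N)$ and $c_2=c_2(F)$, we have $c(F^*)=1-c_1+c_2$ and $c(F)=1+c_1+c_2$. Hence $c_1(E_s)=0$ and
\[
 c_2(E_s)=2c_2-c_1^2.
\]
This is the numerical part. The statement asks for $\det(E_s)\cong\mathcal{O}_X$ as line bundles, not just $c_1=0$. For this we will use that determinants are multiplicative in short exact sequences:
\[
 \det E_s\cong\det(F^*)\otimes\det F\cong N^{-1}\otimes N\cong\mathcal{O}_X.
\]
This step is immediate.

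Next we insert the numbers from Lemma~\ref{lem:tautological}: $\int_X c_1(F)^2=N^2=10$ and $\int_Xc_2(F)=10$. This gives
\[
 \int_X c_2(E_s)=2\cdot10-10=10.
\]
We should double-check the sign convention for $c_2(F^*)=c_2(F)$. For a rank two bundle the Chern roots are negated, so the product of the roots is unchanged. The cross term $c_1(F^*)c_1(F)=-c_1^2$ supplies the minus sign, so the formula above stands.

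Finally, we read off the discriminant. Since $r=4$ and $c_1(E_s)=0$, we get $\Delta(E_s)=c_2(E_s)$. Its degree is $10\neq0$, so its class in $H^4(X,\mathbb{Q})\cong\mathbb{Q}$ is nonzero.

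The only potential obstacle is confirming that the Chern numbers of Lemma~\ref{lem:tautological} are those of $F$ itself, namely $c_1(F)=c_1(N)$. This holds by the definition $N=\det F$, so no step is genuinely hard.
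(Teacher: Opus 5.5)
Your proposal is correct and follows essentially the same route as the paper. Both take determinants in the extension to get $\det(E_s)\cong N^{-1}\otimes N\cong\mathcal{O}_X$, use Whitney to get $\int_X c_2(E_s)=10-10+10=10$, and then use $H^4(X,\mathbb{Q})\cong\mathbb{Q}$ to conclude $\Delta(E_s)=c_2(E_s)\neq0$.
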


\begin{proof}
Taking determinants in \eqref{eq:extension} gives
$\mathrm{det}(E_s)\cong\mathrm{det}(F^*)\otimes\mathrm{det}(F)
\cong\mathcal{O}_X$, while the Whitney formula and
Lemma~\ref{lem:tautological} give
\[
 \int_Xc_2(E_s)
 =\int_X\bigl(c_2(F^*)+c_1(F^*)c_1(F)+c_2(F)\bigr)
 =10-10+10=10.
\]
Since $X$ is a connected smooth projective surface,
$H^4(X,\mathbb{Q})\cong\mathbb{Q}$, so $c_2(E_s)$ is nonzero, which implies $\Delta(E_s)\neq0$.
\end{proof}

\section{Geometry of pullback Higgs bundles}\label{sec:curves}

In this section, we establish the criterion for Higgs subsheaves of pullback Higgs bundles, describe the root divisor associated to the $\Omega_X^1$-valued symmetric form $\beta$ in \eqref{eq:beta}, and choose the extension class by analyzing its restriction to the diagonal.

\subsection{The curve criterion}

Let $C$ be a smooth projective curve and let $f:C\to X$ be a
morphism.  Set
\[
 V=f^*F,\qquad f^*\beta: \mathrm{Sym}^2(V)\longrightarrow K_C,\qquad
 u_f=f^*\psi:=(1\otimes df)\circ f^*(\psi):
 V\longrightarrow V^*\otimes K_C.
\]
Thus $f^*\beta$ is the symmetric form corresponding by adjunction to $u_f$. The underlying vector bundle $f^*E_s$ is an extension of $V$ by $V^*$, and the pullback Higgs field $f^*\theta_s$ is the composite
\[
 f^*E_s\twoheadrightarrow V\xrightarrow{\,u_f\,}
 V^*\otimes K_C\hookrightarrow f^*E_s\otimes K_C.
\]
Let $e_f=f^*e_s\in\mathrm{Ext}^1_C(V,V^*)$ denote the class of this extension.  For subsheaves $I\subset V$ and $J\subset V^*$, denote the natural inclusion and quotient by $i_I:I\hookrightarrow V$ and $q_J:V^*\twoheadrightarrow V^*/J$, respectively.

\begin{lem}\label{lem:curve-criterion}
Let $I\subset V$ and $J\subset V^*$ be subsheaves.  There exists a Higgs
subsheaf $H$ of the pullback Higgs bundle $f^*\mathcal{E}_s$ with
$J=H\cap V^*$ and
$I=\mathrm{Im}(H\to V)$ if and only if
\begin{align}\label{eq:invariance}
 u_f(I)\subset J\otimes K_C
\end{align}
and the restriction pushout class
\begin{align}\label{eq:obstruction}
 \epsilon_{I,J}:=(q_J)_*(i_I^*e_f)\in\mathrm{Ext}^1_C(I,V^*/J)
\end{align}
vanishes.  In this case,
\[
 0\longrightarrow J\longrightarrow H\longrightarrow I
 \longrightarrow0,
 \qquad
 \mathrm{deg} H=\mathrm{deg} I+\mathrm{deg} J.
\]
If $H$ is saturated in $f^*E_s$, then $J$ is saturated in $V^*$, whereas
$I$ need not be saturated in $V$.
\end{lem}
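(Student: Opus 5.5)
The plan is to work directly with the extension $0\to V^*\xrightarrow{j} f^*E_s\xrightarrow{\pi} V\to0$ of class $e_f$ and to translate both conditions on $H$ into conditions on the pair $(I,J)$.

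\emph{Necessity.} Let $H$ be a Higgs subsheaf and set $J=H\cap V^*$ and $I=\pi(H)$, so that $0\to J\to H\to I\to0$ is exact. Additivity of degree then gives $\deg H=\deg I+\deg J$.

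The first condition comes from invariance. The pullback Higgs field is $f^*\theta_s=(j\otimes1)\circ u_f\circ\pi$. If $h\in H$, then $f^*\theta_s(h)=u_f(\pi h)$ lies in $(H\otimes K_C)\cap(V^*\otimes K_C)=J\otimes K_C$, using that $K_C$ is a line bundle. Since $\pi(H)=I$, this gives \eqref{eq:invariance}.

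The second condition comes from the extension class. The class $i_I^*e_f$ is represented by the pullback $\pi^{-1}(I)$, which is an extension of $I$ by $V^*$. Pushing out along $q_J$ gives the extension $\pi^{-1}(I)/J$ of $I$ by $V^*/J$. The image of $H$ in $\pi^{-1}(I)/J$ is $H/J\cong I$, and it maps isomorphically onto $I$. It is therefore a splitting, so $\epsilon_{I,J}=0$.

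\emph{Sufficiency.} Assume both conditions hold. Vanishing of $\epsilon_{I,J}$ gives a splitting $\sigma:I\to\pi^{-1}(I)/J$. Define $H$ as the preimage of $\sigma(I)$ in $\pi^{-1}(I)$. Then $H\supset J$, $H\cap V^*=J$ (since $\sigma(I)\cap V^*/J=0$), and $\pi(H)=I$. Invariance then follows from the same formula: $f^*\theta_s(H)=u_f(I)\subset J\otimes K_C\subset H\otimes K_C$. Here we may use sheaf-level statements freely, since everything is torsion free on a smooth curve.

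\emph{Saturation.} Suppose $H$ is saturated, so that $f^*E_s/H$ is torsion free. The map $V^*/J\to f^*E_s/H$ is injective because $J=H\cap V^*$. Hence $V^*/J$ embeds in a torsion-free sheaf and is itself torsion free, i.e. $J$ is saturated. For the last assertion, that $I$ need not be saturated, I would give an example. Take $J=V^*$ and $H=\pi^{-1}(I)$ with $I$ a nonsaturated subsheaf. Then $f^*E_s/H\cong V/I$ has torsion, so this $H$ is not saturated. Instead I would exhibit a split situation: $H\subset V^*\oplus V$ the graph-like subsheaf $\{(\phi(x),x)\}$ of a map $\phi:I'\to V^*$ that does not extend over the saturation of $I'$. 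Then $H$ is saturated while its image $I'$ is not.

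The only mildly delicate step is this last illustrative example. It is nonessential, and it suffices to note that nothing in the argument forces saturation of $I$. Everything else is routine diagram chasing.
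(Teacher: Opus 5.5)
Your argument for the equivalence, the exact sequence $0\to J\to H\to I\to 0$, the degree formula, and the saturation of $J$ is correct. It is the same diagram chase as in the paper, written out more explicitly: you identify the pushout of $\pi^{-1}(I)$ along $q_J$ with $\pi^{-1}(I)/J$, and you take $H$ to be the preimage of the image of a splitting.

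The part that does not work as written is your example showing that $I$ need not be saturated.

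\emph{First, saturation of the graph.} The graph of a map $\phi\colon I'\to V^*$ that fails to extend over the saturation $\overline{I'}$ need not itself be saturated. Over a discrete valuation ring with uniformizer $t$, take $I'=t^2Rv_1$ and $\phi(t^2v_1)=ta_1$. Then the graph is $tR(a_1+tv_1)$, which is not saturated. What is true is the following. Let $\tilde\phi$ be the rational extension of $\phi$ to $\overline{I'}$. The saturation of the graph is the graph of $\tilde\phi$ over $\{v\in\overline{I'} : \tilde\phi(v)\in V^*\}$. This is a proper, full-rank, hence nonsaturated, subsheaf of $\overline{I'}$ exactly when $\phi$ does not extend. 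So you should pass to the saturation; the saturation of a Higgs subsheaf is again Higgs, as the paper notes in its proof.

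\emph{Second, Higgs invariance.} You never check that the graph is a Higgs subsheaf. Since the graph meets $V^*$ in $0$, condition \eqref{eq:invariance} with $J=0$ forces $u(I')=0$. So your construction only works inside the kernel of $u$. A working instance is $u(v_1)=0$ and $H=R(a_1+tv_1)$, which is saturated, Higgs, and has image $tRv_1$.

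The paper's remark after the lemma takes a different example: $J=Ra_2\neq0$, $u$ invertible, and $H=Ra_2+R(a_1+t^cv_1)$. This shows that nonsaturated images already occur in the generic rank two isotropic situation, which is exactly where Proposition~\ref{prop:rank-two-curves} has to handle them.

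Finally, saying that nothing in the argument forces saturation of $I$ does not prove the ``need not'' assertion. An explicit, checked example of the kind above is required.
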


\begin{proof}
Suppose first that $H$ exists.  Intersecting it with the kernel $V^*$ and
mapping it to the quotient $V$ gives a commutative diagram
\[
\begin{tikzcd}
0\arrow[r]&J\arrow[r]\arrow[d]&H\arrow[r]\arrow[d]
&I\arrow[r]\arrow[d]&0\\
0\arrow[r]&V^*\arrow[r]&f^*E_s\arrow[r]&V\arrow[r]&0.
\end{tikzcd}
\]
Since $f^*\theta_s$ vanishes on $V^*$ and factors through
$u_f:V\to V^*\otimes K_C$, the condition that $H$ being preserved by $f^*\theta_s$ is equivalent to \eqref{eq:invariance}. Moreover, a lift of $I$ whose intersection with $V^*$ is $J$ exists precisely when the pullback of the bottom extension to $I$, followed by pushout to $V^*/J$, splits, which is equivalent to \eqref{eq:obstruction}, while the top row gives the degree formula. Conversely, the vanishing in \eqref{eq:obstruction} gives the required subsheaf, and \eqref{eq:invariance} makes it a Higgs subsheaf.  Its saturation is again a Higgs subsheaf because over a discrete valuation ring, the inclusion of $t^mx$ in the subsheaf implies that $t^m(f^*\theta_s)(x)=(f^*\theta_s)(t^mx)$ lies in its tensor product with $K_C$, and saturation removes the factor $t^m$. Moreover, saturation can only increase the degree.  Finally, if $H$ is saturated, then the natural injection $V^*/J\longrightarrow f^*E_s/H$ shows that $V^*/J$ is
torsion-free and hence that $J$ is saturated, whereas no analogous conclusion holds for $I$.
\end{proof}

\begin{rem}
The image $I$ in Lemma~\ref{lem:curve-criterion} need not be saturated. Indeed, let $R$ be a discrete valuation ring with uniformizer $t$, fix an integer $c\geq1$, and consider
\[
 V=Rv_1\oplus Rv_2,\qquad V^*=Ra_1\oplus Ra_2,
 \qquad u(v_1)=a_2,\quad u(v_2)=a_1.
\]
Inside the split extension $V^*\oplus V$, endowed with the square zero Higgs field that vanishes on $V^*$ and is induced by $u$, the submodule $H=Ra_2+R(a_1+t^cv_1)$ is saturated and preserved by the Higgs field, but its image in $V$ is
$t^cRv_1$. Thus replacing $I$ by its saturation would lose genuine Higgs subsheaves and the associated colength data.
\end{rem}

Since $\mathrm{deg} f^*E_s=0$, it is enough to prove
$\mathrm{deg}H\leq0$ for every proper saturated Higgs subsheaf $H$.  The bundle $V$ is nef, so every quotient line bundle of $V$ has nonnegative degree and every line subsheaf of $V^*$ has nonpositive degree.  The possible rank pairs $(\mathrm{rk}I,\mathrm{rk}J)$ associated to a proper nonzero saturated Higgs subsheaf $H$ are
\[
 (0,1),(1,0),(0,2),(1,1),(2,0),(1,2),(2,1).
\]
If $\mathrm{rk}I=0$, then $\mathrm{deg}J\leq0$, while
$\mathrm{rk}J=2$ and $\mathrm{rk}I\leq1$ imply
$\mathrm{deg}I+\mathrm{deg}J\leq\mathrm{deg}V-\mathrm{deg}V=0$.  The pair $(2,0)$ is impossible when $u_f\neq0$, and when $u_f$ has generic rank two, the same is true for $(1,0)$ and $(2,1)$.  Consequently, only $(1,1)$ must be considered in generic rank two, whereas in generic rank one the additional pairs are $(1,0)$ and $(2,1)$.

\subsection{The root divisor and the generic rank one case}

We now classify the generic rank one curves.  Away from the diagonal, the natural decompositions $F_{p+q}=S_p\oplus S_q$ and
$T_{p+q}X=T_p\Sigma\oplus T_q\Sigma$ show that contraction by a tangent vector $(v_p,v_q)$ gives the diagonal symmetric form
\begin{align}\label{eq:off-diagonal-form}
 (z_p,z_q)\longmapsto v_pz_p^2+v_qz_q^2.
\end{align}

For $x\in X$ and $0\neq v\in T_xX$, let
\[
 \beta_{x,v}(a,b)=\langle\beta_x(a,b),v\rangle,
 \qquad a,b\in F_x,
\]
and denote its adjoint by $\psi_{x,v}=(1\otimes v)\circ\psi_x:F_x\to F_x^*$.  Since $\det(\psi_{x,\lambda v})=\lambda^2\det(\psi_{x,v})$, the equation $
 \det(\psi_{x,v})=\det(\beta_{x,v})=0$ defines a divisor $\mathscr{R}\subset\mathbb{P}(T_X)$, which we call the
root divisor of $\beta$.

\begin{lem}\label{lem:root-divisor}
The divisor $\mathscr{R}$ is isomorphic to $\Sigma\times\Sigma$, and under this isomorphism the projection $\mathscr{R}\to X$ is the quotient map $\pi:\Sigma\times\Sigma\to\Sigma^{(2)}$.  Moreover, if $f: C\to X$ is a nonconstant morphism from a smooth projective curve and $u_f$ has generic rank one, then $f(C)=X_p:=\{p+q \mid q\in\Sigma\}$ for some $p\in\Sigma$, or $f(C)=j(\Sigma):=\{2q \mid q\in\Sigma\}$.
\end{lem}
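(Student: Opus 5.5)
We describe $\mathscr{R}$ fiberwise and then globalize. Off the diagonal, \eqref{eq:off-diagonal-form} shows that contracting $\beta$ with $v=(v_p,v_q)$ gives the diagonal form $v_pz_p^2+v_qz_q^2$. Its determinant is $v_pv_q$, so it vanishes exactly on the two lines $T_p\Sigma\oplus0$ and $0\oplus T_q\Sigma$ in $T_{p+q}X$. Hence $\mathscr{R}$ has exactly two points over each $p+q$ with $p\neq q$. We define
\[
 \Phi:\Sigma\times\Sigma\to\mathbb{P}(T_X),\qquad (p,q)\mapsto\bigl(p+q,\ [\text{tangent direction at }t=0\text{ of the curve }t\mapsto p(t)+q]\bigr).
\]
Here $p(t)$ moves along $\Sigma$ through $p$. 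Equivalently, this is the line $d\pi_{(p,q)}(T_p\Sigma\oplus0)$. Since $\pi$ is étale off the diagonal, $\Phi$ lands in $\mathscr{R}$ there and is injective there, and $\mathrm{pr}\circ\Phi=\pi$.

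At the diagonal point $2q$ we compute in a local coordinate. Write $T_{2q}X=H^0(\mathcal{O}_Z(Z))$ with $Z=2q$, and use the basis $1,t$ for $H^0(\mathcal{O}_Z)$. The residue pairing then identifies a tangent vector $v$ with a functional on $\mathbb{C}[t]/(t^2)$. Contracting the multiplication form gives the matrix
\[
 \begin{pmatrix}v(1)&v(t)\\ v(t)&0\end{pmatrix}.
\]
Its determinant is $-v(t)^2$. Thus $\mathscr{R}$ meets this fiber in the single point $v(t)=0$, with multiplicity two. That direction should be $d\Phi$ applied to the tangent of the diagonal, namely the tangent to $j(\Sigma)$ at $2q$. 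We check this by taking $p(t)\to q$ in the formula for $\Phi$: the limiting direction is that of the curve $p+q$ moving one point, which at $2q$ is tangent to $X_q$. We should also double-check that $X_q$ and $j(\Sigma)$ are tangent at $2q$, which is consistent with both lying in the same one-dimensional kernel.

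From this, $\Phi$ is a bijective morphism onto $\mathscr{R}_{\mathrm{red}}$. To get an isomorphism onto $\mathscr{R}$, we would compute the class of $\mathscr{R}$ in $\mathbb{P}(T_X)$, or argue locally, to show that $\mathscr{R}$ is reduced and smooth. The local form $\det=v_pv_q$ near the diagonal should be written in symmetric coordinates $\sigma_1=z_1+z_2$, $\sigma_2=z_1z_2$ on $X$. In these coordinates $\mathscr{R}$ becomes the pullback of the branch structure, so it is locally the graph of the double cover, hence smooth. I expect this local computation at the diagonal to be the main obstacle: one must show that $\mathscr{R}$ is smooth there even though the fiber has a double point, so that $\mathscr{R}\to X$ is exactly the ramified double cover $\pi$. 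I would first try writing $\beta$ in the coordinates $\sigma_1,\sigma_2$ with the basis $1,z$ of $F$, and compute $\det\beta_{x,v}$ as a polynomial in $(\sigma,v)$.

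For the curve statement, let $f$ be nonconstant with $u_f$ of generic rank one. Then for general $c\in C$ the direction $[df_c]$ lies in $\mathscr{R}$, since the determinant of the contracted form at $c$ is $\det\beta_{f(c),df_c}$. So the tangent lift $C\to\mathbb{P}(T_X)$ lands in $\mathscr{R}\cong\Sigma\times\Sigma$. Via $\Phi$, it gives a lift $g=(g_1,g_2):C\to\Sigma\times\Sigma$ (after normalization) with $\pi\circ g=f$. Moreover, $df$ at a general point lies in $d\pi(T\Sigma\oplus0)$, that is, it equals the image of the tangent to the first factor. Off the diagonal, $d\pi$ is an isomorphism, so $dg_2=0$ and $g_2$ is constant, giving $f(C)=X_p$. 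Otherwise $f(C)$ lies in the diagonal, that is, $f(C)=j(\Sigma)$.
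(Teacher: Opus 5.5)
The gap is the first assertion: you do not establish $\mathscr{R}\cong\Sigma\times\Sigma$. You only obtain a bijective morphism $\Phi:\Sigma\times\Sigma\to\mathscr{R}_{\mathrm{red}}$, and you explicitly postpone the local computation at the diagonal. That computation is the whole content of the isomorphism.

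Your fiberwise determinant $-v(t)^2$ at $2q$ cannot settle it. Let $\delta$ be a local equation of $j(\Sigma)$ and $t$ a fiber coordinate. Both $t^2=\delta$ and $t^2=\delta^3$ have a double point over $2q$, and both admit a bijective finite map from a smooth surface. But the second is cuspidal along the curve over $j(\Sigma)$, and then $\Phi$ would only be a normalization. What decides the matter is how $\det\beta_{x,v}$ varies transversally to the diagonal, which a single fiber does not see.

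The rest of your argument is fine. The off-diagonal analysis is correct, and $\Phi$ is the paper's $\rho$ with the two factors swapped. For the curve classification, lifting the tangent direction and using injectivity of $d\pi$ off the diagonal to get $dg_2=0$ is a valid substitute for the paper's $g_1'g_2'=0$. Bijectivity of $\Phi$ plus your normalization remark suffices there. You should add that the lift, defined where $df\neq0$, extends over the zeros of $df$ because $C$ is smooth and $\mathbb{P}(T_X)\to X$ is proper. Also, the tangent direction to $j(\Sigma)$ is $d\pi$, not $d\Phi$, of the diagonal's tangent.

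The missing step is exactly the one you propose but do not carry out. Take the basis $1,z$ of $\mathcal{O}[z]/(z^2-\sigma_1z+\sigma_2)$, and set $a=d\sigma_1(\xi)$, $b=d\sigma_2(\xi)$. Up to an overall sign, the residue pairing gives $\langle 1,\xi\rangle=a$, $\langle z,\xi\rangle=\sigma_1a-b$, and hence $\langle z^2,\xi\rangle=(\sigma_1^2-\sigma_2)a-\sigma_1b$. So $\det\beta_{x,\xi}=-(b^2-\sigma_1ab+\sigma_2a^2)=-(b-z_1a)(b-z_2a)$.

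No nonzero degenerate direction has $a=0$. So on the chart $t=b/a$, the divisor $\mathscr{R}$ is scheme-theoretically the hypersurface $t^2-\sigma_1t+\sigma_2=0$. This is smooth, since its $\sigma_2$-derivative is $1$. It is identified with the $(z_1,z_2)$-plane by $z_2=t$, $z_1=\sigma_1-t$.

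In your convention this is exactly the local inverse of $\Phi$, since moving the first point gives $(a,b)=(1,z_2)$, i.e. $t=z_2$. These local inverses glue, which proves that $\Phi$ is an isomorphism onto $\mathscr{R}$ and that $\mathscr{R}\to X$ is $\pi$.
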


\begin{proof}
Consider the morphism
$\rho:\Sigma\times\Sigma\longrightarrow\mathbb{P}(T_X)$ that sends $(p,q)$ to the tangent direction at $p+q$ obtained by varying $q$ while keeping $p$ fixed; this direction remains nonzero along the diagonal.  Formula \eqref{eq:off-diagonal-form} identifies $\Sigma\times\Sigma\setminus\Delta$ with $\mathscr{R}$ over $X\setminus j(\Sigma)$, so it remains only to examine the points above the diagonal.

Choose a local coordinate $z$ on $\Sigma$, let $\sigma_1=z_1+z_2$ and $\sigma_2=z_1z_2$, and use $1,z$ as a basis of the length two algebra $\mathcal{O}[z]/(z^2-\sigma_1z+\sigma_2)$.  For a tangent vector $\xi$ with $a=d\sigma_1(\xi)$ and $b=d\sigma_2(\xi)$, the contracted
symmetric form $\beta_{x,\xi}$ has the matrix
\[
 \begin{pmatrix}
 a&\sigma_1a-b\\
 \sigma_1a-b&(\sigma_1^2-\sigma_2)a-\sigma_1b
 \end{pmatrix}
\]
whose determinant is
\[
 -(b-z_1a)(b-z_2a).
\]
No nonzero degenerate direction has $a=0$, and hence the affine coordinate $t=b/a$ covers $\mathscr{R}$, where its equation becomes $t^2-\sigma_1t+\sigma_2=0$.  The inverse coordinates $z_1=t$ and $z_2=\sigma_1-t$ identify $\mathscr{R}$ locally with
$\Sigma\times\Sigma$; these local inverses are restrictions of $\rho$, so they glue and also prove smoothness above the diagonal.

On the open subset of $C$ where $df\neq0$, the tangent direction of $f$ defines a lift to $\mathscr{R}$, which extends across the zeros of $df$ because $C$ is smooth and $\mathscr{R}$ is proper over $X$.  Let $g=(g_1,g_2):C\to\Sigma\times\Sigma$ be the resulting morphism, so that $f=\pi\circ g$.  If $g(C)$ is not contained in the diagonal, choose bases $\partial$ of $T_C$ and $e_i$ of $g_i^*T_\Sigma$ at the generic point and
define $g_i'\in K(C)$ by $dg_i(\partial)=g_i'e_i$.  Formula
\eqref{eq:off-diagonal-form} gives $g_1'g_2'=0$, so one differential vanishes identically and the corresponding map is constant in characteristic zero, which shows that $f(C)$ is a coordinate curve.  If $g(C)$ is contained in the diagonal, then $f(C)=j(\Sigma)$.
\end{proof}

\subsection{The diagonal restriction and the choice of the extension class}

We now choose the section $s$ used in \eqref{eq:extension}.  Restricting the evaluation sequence \eqref{eq:evaluation} to the diagonal $j:\Sigma\to X$ and using the natural jet filtration gives
\begin{align}\label{eq:jets}
 0\longrightarrow S^3=S\otimes K_\Sigma
 \longrightarrow j^*F=J^1S\longrightarrow S\longrightarrow0
\end{align}
while the corresponding inverse-image sequence is the twisted conormal sequence
\begin{align}\label{eq:conormal}
 0\longrightarrow S^{-4}\longrightarrow
 M_S:=\Omega_{\mathbb{P}^2}^1(1)|_\Sigma
 \longrightarrow S^3\longrightarrow0.
\end{align}
The latter is the inverse image of the subline $S^3\subset j^*F$ in \eqref{eq:jets} under $W\otimes\mathcal{O}_\Sigma\to j^*F$.    For completeness, the first sequence also follows by pulling the universal divisor back to $\Sigma\times\Sigma$ and restricting along the diagonal, where one has
\[
 0\longrightarrow\mathcal{O}_{\Delta}(-\Delta)
 \longrightarrow\mathcal{O}_{2\Delta}
 \longrightarrow\mathcal{O}_{\Delta}\longrightarrow0,
 \qquad
 \mathcal{O}_{\Delta}(-\Delta)\cong K_\Sigma.
\]
In particular, $j^*N^{-1}\cong S^{-4}$, and under this isomorphism the composite
$j^*N^{-1}\to j^*F^*=(j^*F)^*\twoheadrightarrow S^{-3}$ obtained by restricting the defining morphism $N^{-1}\to F^*$ of
\eqref{eq:extension} is multiplication by $s$.  Indeed, the
canonical isomorphism $F\cong F^*\otimes N$ sends $v$ to the functional $w\mapsto w\wedge v$, so the map $N^{-1}\to F^*$ corresponding to $s$ sends a local section $\eta$ to the functional $w\mapsto\eta(w\wedge s)$.  Its restriction to the subline $S^3\subset j^*F$ is the image of $s$ in the quotient
$j^*F/S^3\cong S$, and hence, under $\mathrm{Hom}(S^{-4},S^{-3})\cong H^0(\Sigma,S)$, the composite corresponds
to the original section $s$.

The restriction of $j^*e_s$ to $S^3\subset j^*F$, followed by pushout through $(j^*F)^*\twoheadrightarrow S^{-3}$, is the extension
\[
 0\longrightarrow S^{-3}\longrightarrow P_s
 \longrightarrow S^3\longrightarrow0
\]
obtained by pushing out \eqref{eq:conormal} by multiplication by $s$. Equivalently, it fits into the pushout diagram
\[
\begin{tikzcd}
0\arrow[r]&S^{-4}\arrow[r]\arrow[d,"s"]
&M_S\arrow[r]\arrow[d]&S^3\arrow[r]\arrow[d,equal]&0\\
0\arrow[r]&S^{-3}\arrow[r]&P_s\arrow[r]&S^3\arrow[r]&0.
\end{tikzcd}
\]
If the zero divisor $H_s=(s=0)$ is reduced, one also has
\begin{align}\label{eq:elementary-modification}
 0\longrightarrow M_S\longrightarrow P_s
 \longrightarrow S^{-3}|_{H_s}\longrightarrow0.
\end{align}
At every point $r\in H_s$, the added direction is the conormal line $(S^{-4})_r\subset(M_S)_r$.  Over the local discrete valuation ring $R=\mathcal{O}_{\Sigma,r}$, choose a parameter $t$ and a basis $e_0,e_1$ of $M_S$ such that $s=t$ and $e_0$ generates the subline $S^{-4}\subset M_S$; if $b$ is a generator of the added line, then the elementary modification is the lattice inclusion
\[
 Re_0\oplus Re_1
 =R(tb)\oplus Re_1
 \subset Rb\oplus Re_1=P_s.
\]

\begin{lem}\label{lem:good-section}
For every $s$ in a nonempty Zariski open subset of $W$, the divisor $H_s$ is reduced and the bundle $P_s$ is semistable of degree zero.
\end{lem}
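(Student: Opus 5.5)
The plan is to treat the three assertions separately: reducedness of $H_s$, the degree of $P_s$, and semistability. Semistability will follow by comparing line subbundles of $P_s$ with those of $M_S$ through the elementary modification \eqref{eq:elementary-modification}.

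\emph{Reducedness and degree.} The divisor $H_s$ is the intersection of $\Sigma$ with the line $\{s=0\}$. By Bertini, or simply because the dual curve of $\Sigma$ is a proper curve in $(\mathbb{P}^2)^*$, a general line meets $\Sigma$ transversally in five distinct points. So $H_s$ is reduced on a nonempty open subset of $W$. Since $P_s$ is an extension of $S^3$ by $S^{-3}$, it has degree zero.

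\emph{Line subbundles of $M_S$.} I first bound the degrees of line subbundles $L'\subset M_S\subset W\otimes\mathcal{O}_\Sigma$. Dualizing gives a quotient $W^*\otimes\mathcal{O}_\Sigma\twoheadrightarrow M_S^*\twoheadrightarrow L'^{-1}$. The image $U$ of $W^*$ in $H^0(\Sigma,L'^{-1})$ cannot be all of $W^*$, because the kernel condition forces the sections to satisfy the relation $\sum a_ix_i=0$. Hence $\dim U\leq2$, and $U$ generates $L'^{-1}$.
\begin{itemize}
\item If $\dim U=1$, then $L'^{-1}$ is trivial. This gives a trivial subbundle of $W\otimes\mathcal{O}$ contained in $M_S$, i.e.\ a constant linear form vanishing on $\Sigma$, which is impossible.
\item Hence $\dim U=2$ and $L'^{-1}$ is base-point-free with a pencil. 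Gonality four (Lemma~\ref{lem:plane-quintic}) gives $\deg L'\leq-4$.
\end{itemize}
Moreover, equality forces $L'^{-1}\cong\mathcal{O}_\Sigma(1)(-p)$ by Lemma~\ref{lem:plane-quintic}. Then $L'$ is the subbundle whose fibre at $r$ is the linear form defining the line $\overline{rp}$ (the tangent line at $r=p$).

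\emph{Semistability of $P_s$.} Let $L\subset P_s$ be a saturated line subbundle. If $L\subset S^{-3}$, then $\deg L\leq-15$. Otherwise set $L'=L\cap M_S$, a line subbundle of $M_S$. By \eqref{eq:elementary-modification} and the local lattice description, we have
\[
\deg L\le\deg L'+\#\{r\in H_s: L_r=\text{added line at }r\}\le -4+5=1.
\]
A positive-degree subbundle therefore requires both $\deg L'=-4$ and $L$ passing through the added direction at all five points of $H_s$. In that case $L'$ is the subbundle attached to some $p\in\Sigma$, so $L'=L\cap M_S$. At each $r\in H_s$ the line $L$ then contains $t\,b$ and hence $L'_r=(S^{-4})_r$, the conormal (tangent) line. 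This means the tangent line $T_r\Sigma$ coincides with $\overline{rp}$, i.e.\ $p\in T_r\Sigma$ for all five $r\in H_s$.

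\emph{Main obstacle.} The main work is to show that, for a general line $\ell$, the five tangent lines at the points of $\ell\cap\Sigma$ do not pass through a common point $p$ of $\Sigma$. Concurrence of the tangents at even three of the points is a closed condition on $\ell$. To see that it is proper, I would fix two general points $r_1,r_2$, let $p_0=T_{r_1}\Sigma\cap T_{r_2}\Sigma$, and show that the tangents at the other three points of $\overline{r_1r_2}\cap\Sigma$ do not all pass through $p_0$. If they did for every pair $(r_1,r_2)$, then varying the pair would give a contradiction with the finiteness of flexes and bitangents, since through a general point only $20$ tangent lines pass. Removing this closed set leaves a nonempty Zariski open subset of $W$ on which every line subbundle of $P_s$ has degree $\leq0$, so $P_s$ is semistable.
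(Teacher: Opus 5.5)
Your architecture is the same as the paper's. You bound saturated line subbundles of $M_S$ by gonality and identify the equality case $L_p=S^{-1}(p)$. You then use \eqref{eq:elementary-modification} to get $\deg\le 1$, with equality forcing $p\in T_r\Sigma$ at all five points of $H_s$, and finally exclude this for general $s$. However, two steps do not hold as written.

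\textbf{First: the claim $\dim U\le 2$ is false.} Take the conormal line $S^{-4}\subset M_S$ from \eqref{eq:conormal}. Its coefficients are the three partial derivatives $\partial_iF$ of the equation of $\Sigma$. These are linearly independent on $\Sigma$ because $\Sigma$ is smooth, so $\dim U=3$. The relation $\sum a_ix_i=0$ you invoke is here just Euler's identity $\sum x_i\partial_iF=5F=0$ on $\Sigma$, so it forces no kernel, and your case analysis misses $\dim U=3$. The bound survives with the correct dichotomy, $\dim U=1$ versus $\dim U\ge 2$:
\begin{itemize}
\item $\dim U=1$ is excluded by $H^0(\Sigma,M_S)=0$.
\item If $\dim U\ge2$, then $L'^{-1}$ is nontrivial and globally generated. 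A general pencil in $U$ is base-point-free, and gonality four gives $\deg L'\le -4$.
\end{itemize}
In the equality case, Lemma~\ref{lem:plane-quintic} gives $h^0(L'^{-1})=2$, so $\dim U=2$. Hence the lines in $\mathbb{P}^2$ defined by the fibres $L'_r$ all pass through one point. The degree, together with the uniqueness in Lemma~\ref{lem:plane-quintic}, forces that point to be $p$. This also justifies the fibrewise description of $L_p$ that you state without proof. The paper instead obtains it from the one-dimensional kernel of $W\otimes H^0(S(-p))\to H^0(K_\Sigma(-p))$.

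\textbf{Second: the genericity step is only gestured at, and it is the crux.} The phrase ``varying the pair would give a contradiction with the finiteness of flexes and bitangents'' is not an argument. Your sketch also aims at a stronger statement than needed: that the five tangents are not concurrent at any $p_0=T_{r_1}\Sigma\cap T_{r_2}\Sigma\in\mathbb{P}^2$. For that statement a parameter count fails, because $p_0$ moves in a two-dimensional family, so a genuine geometric argument is required. One way to supply it is to degenerate $\overline{r_1r_2}$ to a general tangent line $T_r\Sigma$. Then $p_0\to r$, and the limit forces $T_r\Sigma$ to be tangent at its three residual points, contradicting the finiteness of bitangents.

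The paper avoids this entirely by using what you had already established, namely $p\in\Sigma$. For fixed $p$, the set $R_p=\{r\in\Sigma \mid p\in T_r\Sigma\}$ is finite, being cut out by the first polar of $p$. So only finitely many transverse lines have all five intersection points in $R_p$. The bad pairs $(p,\ell)$ therefore form a closed set of dimension at most one. Its image in the two-dimensional space of lines is a proper closed subset, and lines outside it give the required open set of $s$.
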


\begin{proof}
Let $L\subset M_S$ be a saturated line subbundle.  Since
$M_S\subset W\otimes\mathcal{O}_\Sigma$ is a subbundle and
$H^0(\Sigma,M_S)=0$ by the Euler sequence, $L^{-1}$ is a nontrivial globally
generated line bundle; the fact that $\Sigma$ has gonality four therefore
gives
\begin{align}\label{eq:line-degree}
 \mathrm{deg} L\leq-4.
\end{align}
Lemma~\ref{lem:plane-quintic} shows that equality holds precisely for
\[
 L=L_p:=S^{-1}(p),\qquad p\in \Sigma.
\]

The corresponding embedding can be described explicitly.  For the line
bundle $S(-p)$, the multiplication map
\[
 W\otimes H^0(S(-p))\longrightarrow H^0(K_\Sigma(-p))
\]
has a one-dimensional kernel, generated in coordinates with
$p=[0:0:1]$ by $x\otimes y-y\otimes x$, and hence the inclusion
$L_p\to M_S$ is unique up to a scalar.  In the fiber
$\mathbb{P}(M_{S,r})$, identified with the pencil of lines through $r$, its
value is $\overline{pr}$ for $r\neq p$ and its limiting value at $r=p$ is
$T_p\Sigma$, whereas the conormal direction at $r$ is represented by
$T_r\Sigma$.

Suppose that $R\subset P_s$ is a saturated line subbundle of positive
degree, let $R_0=R\cap M_S$, and let $L$ be its saturation in $M_S$.  The
elementary modification \eqref{eq:elementary-modification} gives
$\mathrm{length}(R/R_0)\leq5$, which, together with
\eqref{eq:line-degree}, forces
\[
 \mathrm{deg} R=1,\quad \mathrm{deg} L=-4,\quad R_0=L=L_p,\quad
 \mathrm{length}(R/R_0)=5.
\]
The last equality means that $L_p$ agrees with the added conormal direction
at all five points of $H_s$, and the preceding fiberwise description shows
that this is equivalent to
\begin{align}\label{eq:bad-line}
 p\in T_r\Sigma\qquad\text{for every }r\in H_s.
\end{align}

Let $U\subset(\mathbb{P}^2)^*$ be the open set of lines transverse to
$\Sigma$, and for $p\in\Sigma$ let
$R_p=\{r\in\Sigma \mid p\in T_r\Sigma\}$.  Since $R_p$ is cut out by the nonzero
first polar quartic of $\Sigma$ with respect to $p$, it is finite.  Consider
$\mathcal{B}=\{(p,\ell)\in\Sigma\times U \mid \Sigma\cap\ell\subset R_p\}$ and
the finite \'etale universal divisor $\mathcal{D}\to U$.  On
$\mathcal{D}_\Sigma=(\Sigma\times U)\times_U\mathcal{D}$, the complement of
the polar condition is open, and its image in $\Sigma\times U$ is open
because the morphism is finite \'etale, so $\mathcal{B}$ is closed.  For a
fixed $p$, every bad transverse line is determined by two of its five
intersection points, all belonging to the finite set $R_p$; hence the
fibers of $\mathcal{B}\to\Sigma$ are finite and
$\mathrm{dim}\,\mathcal{B}\leq1$.  Since $\Sigma$ is projective, the image
of $\mathcal{B}$ in $U$ is closed of dimension at most one, and a line
outside this image determines a section $s$ for which $H_s$ is reduced and
\eqref{eq:bad-line} never occurs.  Thus $P_s$ has no line
subbundle of positive degree and, since $\mathrm{deg}P_s=0$, it is semistable.
\end{proof}

From now on, fix a section $s$ as in Lemma~\ref{lem:good-section}.

\begin{lem}\label{lem:finite-pullback}
Let $C$ be a smooth projective curve and let $h:C\to\Sigma$ be a
finite morphism.  Then $h^*P_s$ is semistable of degree zero.
\end{lem}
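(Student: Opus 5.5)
Degree zero is immediate, since $\deg h^*P_s=\deg(h)\cdot\deg P_s=0$. For semistability, the first tool to reach for is the characteristic-zero fact that semistability on curves is preserved under finite pullback. Here is how I would argue it concretely. Let $h:C\to\Sigma$ be finite of degree $d$. Pass to the Galois closure: choose a finite morphism $h':C'\to C$ such that $h\circ h':C'\to\Sigma$ factors through a Galois cover $C'\to C'/G$, with $C'/G\to\Sigma$ generically étale. Pullback of semistable bundles along a generically étale map is handled via the Harder--Narasimhan filtration, as follows.

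Let $g:C'\to\Sigma$ be Galois with group $G$, so that $\Sigma=C'/G$. If $g^*P_s$ were not semistable, its maximal destabilizing line subbundle $R\subset g^*P_s$ would be unique. Since $g^*P_s$ carries a natural $G$-linearization, $R$ would be $G$-invariant. It would then descend to a line subsheaf of $P_s$, by taking invariants of $g_*$, or equivalently by Galois descent over the generic point followed by saturation. Its degree would be $\deg R/|G|>0$, which contradicts the semistability of $P_s$ from Lemma~\ref{lem:good-section}.

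For a general finite $h$, take $C'\to C$ such that the composite $C'\to\Sigma$ is Galois. Then $h'^*(h^*P_s)$ is semistable by the Galois case. A destabilizing line subbundle of $h^*P_s$ would pull back to a subbundle of positive degree, $\deg(h')$ times its degree, which is impossible. Hence $h^*P_s$ is semistable.

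The main obstacle is the descent step. One must check that the unique maximal destabilizing subsheaf is preserved by the $G$-action. This holds because each $\sigma\in G$ maps $R$ to another line subsheaf of $g^*P_s$ of the same degree, and uniqueness then forces $\sigma(R)=R$. One must also check that the invariant subsheaf has the expected degree. Alternatively, the whole argument can be replaced by citing the standard result that in characteristic zero the pullback of a semistable bundle under a finite morphism of smooth curves is semistable; the proof sketched above is precisely the usual argument for that result.
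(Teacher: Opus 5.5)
Your proof is correct, but it takes a different route from the paper.

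\textbf{The paper's route.} The paper takes a Jordan--H\"older filtration of $P_s$ and applies Narasimhan--Seshadri. Each stable degree zero factor comes from an irreducible unitary representation of $\pi_1(\Sigma)$. Its pullback comes from the composed representation of $\pi_1(C)$, which is still unitary, so it is polystable of degree zero. An iterated extension of semistable degree zero bundles is semistable, which gives the result.

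\textbf{Your route.} You give the standard algebraic argument: reduce to a Galois cover, then descend the maximal destabilizing subsheaf.
\begin{itemize}
\item Uniqueness in rank two is clear. Two distinct positive-degree line subbundles $R_1,R_2$ would give a generically injective map $R_1\oplus R_2\hookrightarrow g^*P_s$ with $\deg R_1+\deg R_2>0=\deg g^*P_s$, which is impossible. This forces $G$-invariance.
\item The degree bookkeeping works as you claim. Let $R'\subset P_s$ be the saturated descent. Then $g^*R'$ is again saturated in $g^*P_s$, because $g^*(P_s/R')$ is locally free, and it agrees with $R$ at the generic point. Hence $g^*R'=R$ and $\deg R'=\deg R/|G|>0$.
\end{itemize}

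\textbf{Comparison.} The paper's argument is shorter once Narasimhan--Seshadri is available. It also yields the finer statement that pullbacks of stable degree zero bundles are polystable. Yours avoids transcendental input and uses only uniqueness of the Harder--Narasimhan filtration and Galois descent. It therefore works unchanged over any algebraically closed field for separable finite morphisms.

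\textbf{A cosmetic point.} Your first description of the Galois closure, ``factors through a Galois cover $C'\to C'/G$ with $C'/G\to\Sigma$ generically \'etale,'' is muddled. Simply take $C'$ to be the normalization of $C$ in the Galois closure of $K(C)/K(\Sigma)$. Then $C'\to\Sigma$ is itself Galois with $C'/G\cong\Sigma$, which is exactly what your next paragraph uses.
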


\begin{proof}
Let $0=P_0\subset\cdots\subset P_k=P_s$ be a Jordan--H\"older filtration.
By the Narasimhan--Seshadri correspondence \cite{NS65}, its stable degree zero factors correspond to irreducible unitary representations, whose pullbacks
by $h$ are polystable of degree zero. Therefore, pulling back the filtration
shows that $h^*P_s$ is semistable, regardless of whether $h$ is ramified.
\end{proof}

\section{Proof of the main theorem}\label{sec:semistability}

We prove part (1) of Theorem~\ref{thm:main}, first for curves on which $u_f$ has generic rank two.

\begin{prop}\label{prop:rank-two-curves}
Let $C$ be a smooth projective curve and let $f:C\to X$ be a
nonconstant morphism such that $u_f$ has generic rank two.  Then
the pullback Higgs bundle $f^*\mathcal{E}_s$ is semistable.
\end{prop}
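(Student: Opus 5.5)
Since $f$ is nonconstant and $u_f$ has generic rank two, the reduction at the end of Section~\ref{sec:curves}.1 leaves only saturated Higgs subsheaves $H$ with rank pair $(1,1)$. I would therefore fix such an $H$, with $I\subset V$ of rank one and $J\subset V^*$ a saturated line subbundle, and aim to prove $\deg I+\deg J\le 0$. Let $L'\subset V$ be the saturation of $I$, and let $L:=J^\perp\subset V$ be the annihilator of $J$, which is a saturated line subbundle with $V^*/J\cong L^*$. This gives
\[
 \deg J=\deg L-\deg V,\qquad \deg I+\deg J\le \deg L'+\deg L-\deg V .
\]
By adjunction, condition \eqref{eq:invariance} of Lemma~\ref{lem:curve-criterion} says $f^*\beta(I,L)=0$, that is, $L'$ and $L$ are orthogonal for the $K_C$-valued symmetric form $f^*\beta$.

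\emph{Non-isotropic case: $L'\neq L$.} Here the map $L'\oplus L\to V$ is generically an isomorphism, hence injective, so $\deg L'+\deg L\le\deg V$ and the displayed inequality gives $\deg H\le0$. This uses only the invariance condition and no information about the extension class.

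\emph{Isotropic case: $L'=L$.} Now $L$ is a line subbundle of $V$ that is isotropic for $f^*\beta$. The degree bound reads $\deg H\le 2\deg L-\deg V$, which can be positive, so the obstruction \eqref{eq:obstruction} must be used. I would compute $\epsilon_{I,J}$ concretely. By construction $e_s$ is the pushout of \eqref{eq:evaluation} along the map $N^{-1}\to F^*$, $\eta\mapsto(w\mapsto\eta(w\wedge s))$. Hence $\epsilon_{I,J}$ is obtained as follows: take the inverse image $M_L\subset W\otimes\mathcal{O}_C$ of $L$, which is an extension of $L$ by $f^*N^{-1}$; push it out along $f^*N^{-1}\to L^*$, $\eta\mapsto\eta(\,\cdot\wedge s)|_L$; finally restrict to $I$. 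Vanishing of $\epsilon_{I,J}$ then means that this morphism $f^*N^{-1}\to L^*$ extends to a morphism from the inverse image of $I$ in $W\otimes\mathcal{O}_C$ to $L^*$. Composing with $W\otimes\mathcal{O}_C\to V$, I would turn such an extension into a section of a line bundle of degree $\deg V-2\deg L+\deg(L/I)$ (or its dual) that is built from $s$. If $2\deg L-\deg V>0$ (more precisely, if $\deg H>0$), this should force a nontrivial relation between $s$ and the isotropic line $L$.

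The main obstacle is to show that such a relation is impossible. Away from the diagonal, $f$ is locally given by $c\mapsto p(c)+q(c)$, and by \eqref{eq:off-diagonal-form} the isotropic lines are $z_q/z_p=\pm\sqrt{-p'/q'}$. So an isotropic line subbundle amounts to a section of the double cover of $C$ defined by $-p'/q'$, and its degree is controlled by the ramification of the two coordinate maps $C\to\Sigma$. My first attempt would be to show that a destabilizing isotropic $L$ with vanishing obstruction produces a nontrivial relation among $s^2$, $dp$ and $dq$ in $H^0(K_\Sigma)$. Equivalently, via $\alpha_s=a^*\lambda_s$, such an $L$ would produce an abelian subvariety of $J(\Sigma)$ or an isogeny factor attached to the two components $p,q$. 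This is where I would invoke the simplicity of $J(\Sigma)$ from Lemma~\ref{lem:plane-quintic} to reach a contradiction. Curves contained in $j(\Sigma)$ have generic rank one by Lemma~\ref{lem:root-divisor}, so they do not occur here, and the diagonal-crossing points should only affect degrees through finitely many local corrections. I expect this isotropic step to be the genuinely hard part of the proof.
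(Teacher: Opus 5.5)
Your non-isotropic case $L'\neq L$ is correct and matches the paper's argument. The gap is the isotropic case $L'=L$, which you leave as a plan rather than a proof.

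\textbf{What is missing.} The ``section of a line bundle built from $s$'' is never constructed. Numerically your target degree is the right one: $\deg V-2\deg L+\deg(L/I)=q-\deg I=-\deg H$, where $q=\deg(V/L)$. But no object realizing it appears. The later analysis of isotropic lines via $\pm\sqrt{-p'/q'}$, double covers and ramification of the coordinate maps is not carried out, and it is not needed.

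The missing idea is a degree bound coming from the fact that the relevant extension is an elementary modification of a subsheaf of a trivial bundle. Set $Q_C=V/L$ and $q=\deg Q_C$, so that $J=Q_C^*$ and $\deg J=-q$. Let $M_C\subset W\otimes\mathcal{O}_C$ be the inverse image of $L$; it is an extension of $L$ by $(\det V)^{-1}$. The pushout you describe is the pushout of $M_C$ along the image $\bar s$ of $f^*s$ in $H^0(C,Q_C)\cong\mathrm{Hom}((\det V)^{-1},L^*)$.

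If $\bar s\neq0$, the middle term $G$ of this pushout contains $M_C$ with cokernel $L^*|_{(\bar s=0)}$, which has length $q$. Vanishing of $\epsilon_{I,J}$ gives an embedding $I\hookrightarrow G$. Hence the image of $I$ meets $M_C$ in a line subsheaf of degree at least $\deg I-q$. On the other hand, every line subsheaf of $M_C\subset W\otimes\mathcal{O}_C$ has degree $\le0$. So $\deg I\le q$ and $\deg H\le0$, with no information about the isotropic line required.

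\textbf{Where simplicity enters.} The only remaining ``relation'' is $\bar s=0$, i.e.\ $f^*s\in H^0(C,L)$. There the pushout is split and the obstruction gives nothing, so a separate argument is needed, and it is direct:
\begin{itemize}
\item Isotropy of $L$ gives $f^*\alpha_s=\langle u_f(f^*s),f^*s\rangle=0$, so $(a\circ f)^*\lambda_s=0$.
\item Factor $a\circ f$ through $J(C)\to J(\Sigma)$. The nonzero form $\lambda_s$ then vanishes on the image abelian subvariety, which must be zero because $J(\Sigma)$ is simple.
\item So $a\circ f$ is constant. The degree two Abel map is injective for nonhyperelliptic $\Sigma$, so $f$ is constant, a contradiction.
\end{itemize}

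Your mention of $\alpha_s=a^*\lambda_s$ points the right way. However, without the dichotomy $\bar s\neq0$ versus $\bar s=0$ and the elementary modification bound, the proposal does not settle the isotropic case, and so does not prove the proposition.
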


\begin{proof}
By the discussion after Lemma~\ref{lem:curve-criterion}, only the pair $(\mathrm{rk}I,\mathrm{rk}J)=(1,1)$ needs to be
considered.  Let $\bar I\subset V$ and $\bar J\subset V^*$ be the
saturations of $I$ and $J$, and let $\bar I^\perp\subset V$ be the saturated orthogonal line for the symmetric form $u_f$.  Then
\[
 \bar J=(V/\bar I^\perp)^*,\qquad
 \mathrm{deg}\bar J=\mathrm{deg}\bar I^\perp-\mathrm{deg} V.
\]
If $\bar I\neq\bar I^\perp$, their wedge is nonzero, and hence
\[
 \mathrm{deg} I+\mathrm{deg} J
 \leq\mathrm{deg}\bar I+\mathrm{deg}\bar I^\perp-\mathrm{deg} V\leq0.
\]

Suppose that $\bar I=\bar I^\perp$, and let $Q_C=V/\bar I$ and
$q=\mathrm{deg}Q_C$.  The bundle $V$ is nef, so $q\geq0$, while
$\bar J=Q_C^*$ and $\mathrm{deg}\bar J=-q$.  Pulling \eqref{eq:evaluation} back to $C$, define $M_C=\mathrm{ker}(W\otimes\mathcal{O}_C\to Q_C)$, which fits into the exact
sequence
\begin{align}\label{eq:MC}
 0\longrightarrow(\mathrm{det} V)^{-1}\longrightarrow M_C
 \longrightarrow\bar I\longrightarrow0.
\end{align}
Let $\bar s\in H^0(C,Q_C)$ be the image of $f^*s$.  The restriction of $f^*e_s$ to $\bar I$, followed by pushout to
$V^*/\bar J=\bar I^*$, is the pushout
\[
 0\longrightarrow\bar I^*\longrightarrow G_{C,I,s}
 \longrightarrow\bar I\longrightarrow0
\]
of \eqref{eq:MC} by $\bar s$.
Indeed, $M_C$ is the inverse image of $\bar I$ under
$W\otimes\mathcal{O}_C\to V$, while the canonical isomorphism
$\mathrm{Hom}((\mathrm{det} V)^{-1},\bar I^*)\cong Q_C$ sends the
homomorphism induced by $f^*s$ to its image $\bar s\in H^0(C,Q_C)$; hence this is the claimed restriction pushout identification.

Suppose first that $\bar s\neq0$.  Its zero divisor has length $q$, and there is an elementary modification
\begin{align}\label{eq:GC-modification}
 0\longrightarrow M_C\longrightarrow G_{C,I,s}
 \longrightarrow\bar I^*|_{(\bar s=0)}\longrightarrow0.
\end{align}
Over a local discrete valuation ring $R$, choose a generator $e_0$ of the first term in \eqref{eq:MC} and a generator $e$ of $\bar I^*$.  If $\bar s=t^m$, then $G_{C,I,s}=(M_C\oplus Re)/(e_0-t^me)$, so the local colength is exactly $m$.  Since $J\subset\bar J$ and $\epsilon_{I,J}=0$, the extension restricted to $I$ splits after pushout to $V^*/J$, and hence also after the further pushout to $V^*/\bar J$; the latter is the restriction of $G_{C,I,s}$ and gives an injection $I\to G_{C,I,s}$ whose image we denote by $L$.  Intersecting with $M_C$ in \eqref{eq:GC-modification} gives $L/(L\cap M_C)\lhook\joinrel\longrightarrow G_{C,I,s}/M_C$, and hence
$\mathrm{deg}(L\cap M_C)\geq\mathrm{deg} I-q$. But $M_C\subset W\otimes\mathcal{O}_C$, so every line subsheaf of $M_C$
has nonpositive degree.  Therefore $\mathrm{deg} I\leq q$ and
$\mathrm{deg} J\leq-q$, so $\mathrm{deg} I+\mathrm{deg} J\leq0$.

Suppose instead that $\bar s=0$.  Then the section $f^*s$ of $V$ lies in the isotropic line $\bar I$, and hence $f^*\alpha_s=\langle u_f(f^*s),f^*s\rangle=0$. Choose base points and factor $a\circ f:C\to J(\Sigma)$ through the induced
homomorphism $J(C)\longrightarrow J(\Sigma)$. The equality $(a\circ f)^*\lambda_s=0$ says that $\lambda_s$ vanishes on
the image abelian subvariety.  Since $J(\Sigma)$ is simple and
$\lambda_s\neq0$, this image is zero.  Thus $a\circ f$ is constant.  The degree two Abel map is injective because $\Sigma$ is nonhyperelliptic, so $f$ is constant, a contradiction.
\end{proof}

Now we treat the coordinate curves in Lemma~\ref{lem:root-divisor}.  Let $i_p:\Sigma\to X$ be given by $q\mapsto p+q$.

\begin{lem}\label{lem:coordinate}
Let $C$ be a smooth projective curve and let $h:C\to\Sigma$ be a
finite morphism.  Then the pullback Higgs bundle $(i_p\circ h)^*\mathcal{E}_s$ is semistable.
\end{lem}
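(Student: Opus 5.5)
The plan is to compute $V=f^*F$ and $u_f$ explicitly for $f=i_p\circ h$. The degree bounds alone should then dispose of every rank pair left over after Lemma~\ref{lem:curve-criterion}. I do not expect to need the obstruction class $\epsilon_{I,J}$ here. Set $d=\deg h\geq1$.

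\emph{Step 1: the structure of $i_p^*F$.} Restricting the universal divisor to $X_p$, the fiber of $F$ at $p+q$ is $H^0(S|_{p+q})$. It carries two evaluations: evaluation at $p$, with constant target $S_p\otimes\mathcal{O}_\Sigma$, and evaluation at the moving point $q$. Each should globalize to a surjection $i_p^*F\to S_p\otimes\mathcal{O}_\Sigma$ and $i_p^*F\to S$, and at $q=p$ they should be the two jet maps. Since $\deg i_p^*F=X_p\cdot N=4$ (the secant lines through $p$ form a degree four family), I expect
\[
 0\longrightarrow K\longrightarrow i_p^*F\longrightarrow S\longrightarrow0,\qquad \deg K=-1,
\]
where $K$ is the line of sections vanishing at the moving point $q$. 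Pulling back by $h$ gives $0\to K_C'\to V\to h^*S\to0$ with $\deg K_C'=-d$, $\deg V=4d$ and $\deg h^*S=5d$.

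\emph{Step 2: the form $u_f$.} The tangent direction of $X_p$ is $(0,v_q)$. By \eqref{eq:off-diagonal-form}, the contracted form is $z_q^2$ off $q=p$, and this extends by continuity. So $u_f$ has generic rank one, its kernel is $h^*K$, and it factors as
\[
 V\twoheadrightarrow h^*S\longrightarrow (h^*S)^{*}\otimes K_C\subset V^*\otimes K_C .
\]
In particular, for any $I\not\subset h^*K$, every $J$ satisfying \eqref{eq:invariance} contains a nonzero subsheaf of $(V/h^*K)^*=h^*S^{-1}$. Since $J$ is saturated, it then contains $h^*S^{-1}$ itself.

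\emph{Step 3: the case analysis.} By the discussion after Lemma~\ref{lem:curve-criterion}, only the pairs $(1,0)$, $(1,1)$ and $(2,1)$ remain.
\begin{itemize}
\item[(1,0)] Here $I\subset h^*K$, so $\deg H=\deg I\leq-d<0$.
\item[(2,1)] Here $J=h^*S^{-1}$ by Step~2, and $\deg I\leq\deg V$. Hence $\deg H\leq 4d-5d<0$.
\item[(1,1)] If $I\subset h^*K$, then $\deg I\leq -d$, and $\deg J\leq0$ because $V$ is nef. If $I\not\subset h^*K$, then $J=h^*S^{-1}$. A line subsheaf of $V$ has degree at most $\deg V$ minus the degree of the quotient, and that quotient has nonnegative degree by nefness, so $\deg I\leq4d$. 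Hence $\deg H\leq-d<0$.
\end{itemize}
All three cases are destabilizing-free, so $(i_p\circ h)^*\mathcal{E}_s$ is semistable.

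\emph{Main obstacle.} The step needing the most care is Step~1, specifically the behaviour at $q=p$. There the two evaluations degenerate to jets, as in \eqref{eq:jets}. I must check that the kernel of $u_f$ is still a line subbundle $K$ of degree $-1$ with quotient exactly $S$, and not a twist of $S$ by a multiple of $p$. I would verify this with the local matrix computed in the proof of Lemma~\ref{lem:root-divisor}, taking the tangent direction with $z_1$ fixed. Alternatively, a degree count suffices: $\deg K=\deg V-\deg(V/K)$, and the quotient $V/K\hookrightarrow S$ has degree at most $5$. Even if the quotient were a subsheaf $S(-mp)$ of $S$, the inequalities above keep a margin only when $m=0$. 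So confirming $m=0$ is the essential point, and the local matrix computation at $q=p$ is what I would try first.
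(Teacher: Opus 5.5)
Your proposal is correct and follows the paper's proof essentially step for step. The paper uses the same sequence $0\to S_p\otimes\mathcal{O}_\Sigma(-p)\to i_p^*F\to S\to0$, citing the standard pullback sequence for tautological bundles; this also settles your $m=0$ worry, as does the local matrix $a\begin{pmatrix}1&z_2\\ z_2&z_2^2\end{pmatrix}$ with $z_1$ fixed. It then makes the same kernel/image identification and the same degree estimates for the pairs $(1,0)$, $(2,1)$, $(1,1)$, and likewise never uses $\epsilon_{I,J}$. The only cosmetic difference is in the subcase $I\not\subset h^*K$ of $(1,1)$: the paper bounds $\deg I\le 5m$ via the injection into the quotient $h^*S$, rather than your $\deg I\le 4d$ from nefness, and both suffice.
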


\begin{proof}
There is an exact sequence
\begin{align}\label{eq:coordinate-sequence}
 0\longrightarrow S_p\otimes\mathcal{O}_\Sigma(-p)
 \longrightarrow i_p^*F\longrightarrow S\longrightarrow0.
\end{align}
This is the case $n=2$ of the standard pullback sequence for tautological bundles \cite[Proposition~1.5]{Kru20}.  It also follows directly from the ordered square description of the universal divisor: the constant summand $S_p\otimes\mathcal{O}_\Sigma$ acquires one negative elementary modification when the moving point reaches $p$. For $q\in\Sigma\setminus\{p\}$, use the fiberwise decomposition
$(i_p^*F)_q=S_p\oplus S_q$.  Formula \eqref{eq:off-diagonal-form} shows that contraction with the tangent direction of the moving point is represented by a diagonal matrix with entries $0$ and a nonzero local function.  Its saturated kernel on $\Sigma$ is therefore $S_p\otimes\mathcal{O}_\Sigma(-p)$, while its saturated image in $(i_p^*F)^*$ is the dual $S^{-1}$ of the quotient in
\eqref{eq:coordinate-sequence}.
Let $m=\mathrm{deg} h$.  The saturated kernel of $u_{i_p\circ h}$ has degree $-m$, the quotient line has degree $5m$, the saturated image line in $V^*$ has degree $-5m$, and $\mathrm{deg} V=4m$.  Ramification introduces only a common scalar zero factor in $u_{i_p\circ h}$ and does not change its
saturated generic kernel or image.

For the pair $(1,0)$, condition \eqref{eq:invariance} forces $I$ into the kernel, so $\mathrm{deg} I\leq-m$.  For the pair $(2,1)$, condition \eqref{eq:invariance} forces the saturation of $J$ to be the image line.  Since $J$ is saturated by Lemma \ref{lem:curve-criterion}, it equals the saturated image line, and hence $\mathrm{deg} I+\mathrm{deg} J\leq4m-5m<0$. For the pair $(1,1)$, if $I$ lies in the kernel, then $\mathrm{deg} I\leq-m$ and $\mathrm{deg} J\leq0$.  Otherwise, $I$ maps nontrivially to the degree $5m$ quotient and the saturation of $J$ is the degree $-5m$ image line.  Thus $\mathrm{deg} I+\mathrm{deg} J\leq0$ in every case.  All other rank pairs were already controlled after Lemma~\ref{lem:curve-criterion}.
\end{proof}

It remains to consider the diagonal.

\begin{lem}\label{lem:diagonal}
Let $C$ be a smooth projective curve and let $h:C\to\Sigma$ be a
finite morphism.  Then the pullback Higgs bundle $(j\circ h)^*\mathcal{E}_s$ is semistable.
\end{lem}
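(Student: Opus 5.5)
We set $f=j\circ h$, $m=\mathrm{deg}h$ and $V=f^*F=h^*J^1S$, so that $\mathrm{deg}V=4m$ and $V$ sits in the pullback of \eqref{eq:jets}:
\[
0\to h^*S^3\to V\to h^*S\to0 .
\]
The first step is to compute $u_f$ along the diagonal. The tangent direction of $j(\Sigma)$ at $2q$ is the point of $\mathscr{R}$ lying over the diagonal; in the local coordinates of Lemma~\ref{lem:root-divisor} it is $b=z_1a$ with $z_1=z_2$. Hence $u_{j}$ has generic rank one. Using the matrix in that proof with $\sigma_1=2z$ and $\sigma_2=z^2$, I expect the kernel of $u_j$ to be the subline $S^3\subset j^*F$. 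The image is then the annihilator of $S^3$, namely $S^{-3}\subset (j^*F)^*$, and $u_j$ is the isomorphism $S\to S^{-3}\otimes K_\Sigma$ coming from $S^2\cong K_\Sigma$. After pulling back by $h$, ramification only multiplies $u_f$ by a scalar factor, as in Lemma~\ref{lem:coordinate}. So the saturated kernel of $u_f$ is $h^*S^3$, of degree $3m$, and the saturated image is $h^*S^{-3}$, of degree $-3m$.

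Next we run through the rank pairs left open after Lemma~\ref{lem:curve-criterion}, namely $(1,0)$, $(1,1)$ and $(2,1)$. The kernel now has positive degree, so the degree estimates of Lemma~\ref{lem:coordinate} no longer suffice on their own, and the obstruction \eqref{eq:obstruction} has to be used.

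For $(2,1)$: the subsheaf $J$ is saturated and must equal $h^*S^{-3}$, so $\mathrm{deg}I+\mathrm{deg}J\le 4m-3m=m$. This bound is not good enough, so we also need the obstruction. Since $\mathrm{rk}I=2$, the class $\epsilon_{I,J}$ is the restriction to $I$ of the pushout of $e_f$ to $V^*/J=h^*S^{3}\,{}^{*}$ (the dual of $h^*S^3$), and I will use that restriction below.

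For $(1,0)$ and for $(1,1)$ with $I$ in the kernel: we have $I\subset h^*S^3$. For $(1,1)$ we also have $J\subset h^*S^{-3}$ or $J$ is some other line. Pushing $\epsilon_{I,J}$ forward further to $V^*\twoheadrightarrow h^*S^{-3}$, or to $V^*/J$ mapping onto a quotient of it, gives the class of $h^*P_s$ restricted to $I$. This uses the identification before Lemma~\ref{lem:good-section}: the restriction of $j^*e_s$ to $S^3$, pushed out to $S^{-3}$, is $P_s$.

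The key step is the following. If $\epsilon_{I,J}=0$, then $I$ lifts to $h^*P_s$. By Lemma~\ref{lem:finite-pullback}, $h^*P_s$ is semistable of degree zero, so $\mathrm{deg}I\le0$. For $(1,0)$ this already finishes the case.

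For $(1,1)$ with $I$ in the kernel, the other possibility is that $J$ is not contained in $h^*S^{-3}$. Then $J$ maps injectively to the quotient $h^*S^{-1}$ of $V^*$, and we use the bound $\mathrm{deg}J\le0$ from nefness; the lifting argument is used only when the pushout makes sense. I will have to check carefully that the vanishing of $\epsilon_{I,J}$ still implies vanishing after the pushout to $h^*S^{-3}$. When $J\cap h^*S^{-3}=0$ the map $V^*/J\to h^*S^{-3}$ is not defined, so in that case I would instead bound $\mathrm{deg}I+\mathrm{deg}J$ by $\mathrm{deg}I+\mathrm{deg}(V^*/h^*S^{-3})$, which equals $\mathrm{deg} I - m$. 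Since $I\subset h^*S^3$ and $\mathrm{deg}I\le 3m$, this gives a bound of $2m$, which is still not sufficient, and I would have to refine it.

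For $(1,1)$ with $I$ not in the kernel: the saturation of $J$ is $h^*S^{-3}$, and $I$ injects into $h^*S$. If $I$ maps to $h^*S$ with degree at most $m$, we get $\mathrm{deg}I+\mathrm{deg}J\le m-3m<0$.

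For $(2,1)$: $\epsilon_{I,J}=0$ means that the extension of $I$ by $h^*S^{-3}$, pushed out to $V^*/h^*S^{-3}$, splits. Restricting further to $I\cap h^*S^3$, of degree at least $\mathrm{deg}I-m$, and pushing out to $h^*S^{-3}$ via the other quotient, we again land in $h^*P_s$. Combining this with semistability should give $\mathrm{deg}(I\cap h^*S^3)\le0$, hence $\mathrm{deg}I\le m$ and $\mathrm{deg}I+\mathrm{deg}J\le m-3m<0$.

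The main obstacle is bookkeeping: identifying in each case exactly which pushout of $\epsilon_{I,J}$ equals $h^*P_s$ restricted to $I\cap h^*S^3$, and getting the quotients in the right order, since $P_s$ involves the pushout to $S^{-3}$ and not to the quotient of $V^*$. I would first verify the local form of $u_j$ and the orthogonality $(S^3)^\perp=S^3$, which I expect to hold because the diagonal direction is isotropic. After that, each case reduces to Lemma~\ref{lem:finite-pullback} together with nefness of $V$.
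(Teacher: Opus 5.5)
Your strategy is the paper's. It uses the same case split over $(1,0)$, $(1,1)$, $(2,1)$, the identification of the restriction pushout to $K=h^*S^3$ with $h^*P_s$, and Lemma~\ref{lem:finite-pullback}. For $(2,1)$ you bound through $I\cap K$, which is the paper's colength argument in other words. However, you have misidentified the image of $u_f$, and this leaves one case open.

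The image line is the annihilator of the kernel, $J_0=(V/K)^*=Q^*\cong h^*S^{-1}$. It is the \emph{subbundle} in the dual sequence $0\to h^*S^{-1}\to V^*\to h^*S^{-3}\to0$, while $h^*S^{-3}=K^*=V^*/J_0$ is the \emph{quotient}. In the matrix $2v\begin{pmatrix}1&z\\ z&z^2\end{pmatrix}$ the image is spanned by the functional $(1,z)$, which kills the kernel vector $(-z,1)$. Your claimed isomorphism $S\to S^{-3}\otimes K_\Sigma\cong S^{-1}$ cannot exist, since it would go from degree $5$ to degree $-5$; the induced map is $S\to S^{-1}\otimes K_\Sigma\cong S$. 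Relatedly, $(S^3)^\perp=S^3$ is false: $S^3$ is the kernel of the form, so its orthogonal is all of $j^*F$, and what you actually need is the annihilator statement. Separately, $S=\mathcal{O}_\Sigma(1)$ has degree $5$, so $\mathrm{deg}K=15m$, $\mathrm{deg}Q=5m$, $\mathrm{deg}V=20m$ and $\mathrm{deg}J_0=-5m$. This rescaling is harmless, but the sub/quotient swap is not.

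Because of the swap, in the case $(1,1)$ with $I\subset K$ and $J\neq J_0$ you could only bound $\mathrm{deg}J$ by $\mathrm{deg}\,h^*S^{-1}$, and you left the case unresolved. With the correct picture this case needs no obstruction at all. A saturated line $J\neq J_0$ meets $J_0=\ker(V^*\to K^*)$ trivially, so it injects into $K^*$ and $\mathrm{deg}J\le-15m$, while $\mathrm{deg}I\le\mathrm{deg}K=15m$.

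In the remaining cases the pushouts are exactly the ones you wanted. When $J=J_0$, which is forced in $(2,1)$ and is the other subcase of $(1,1)$ with $I\subset K$, one has $V^*/J=K^*$. Then $\epsilon_{I,J}$ restricted to $I\cap K$ (respectively to $I$) is the class of $h^*P_s$ restricted there, with no ``other quotient'' involved. For $(1,0)$ one pushes $\epsilon_{I,0}$ along $V^*\to K^*$.

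The conclusions are then $\mathrm{deg}I+\mathrm{deg}J\le0$, not $<0$. For $(1,1)$ with $I\not\subset K$ one gets $5m-5m$. For $(2,1)$ one gets $\mathrm{deg}(I\cap K)\le0$, hence $\mathrm{deg}I\le5m$ and $\mathrm{deg}I+\mathrm{deg}J\le5m-5m=0$.
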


\begin{proof}
Let $m=\mathrm{deg} h$.  After pulling back \eqref{eq:jets}, denote the resulting subbundle and quotient by $K=h^*S^3$ and $Q=h^*S$, respectively.
Then
\[
 \mathrm{deg} K=15m,\qquad\mathrm{deg} Q=5m,\qquad\mathrm{deg} V=20m.
\]
In the local coordinates used in the proof of Lemma \ref{lem:root-divisor}, a tangent vector $v$ along the diagonal has $a=2v$ and $b=2zv$.  The matrix in that proof becomes
\[
 2v\begin{pmatrix}1&z\\ z&z^2\end{pmatrix}.
\]
Its kernel is the jet subline $S\otimes K_\Sigma=S^3$ in
\eqref{eq:jets}, and its image is the dual $S^{-1}$ of the quotient $S$. After pullback by $h$, ramification multiplies the matrix by a common local equation of an effective divisor and therefore does not change its saturated generic kernel or image; these are $K$ and $J_0=Q^*\subset V^*$, respectively, with $\mathrm{deg}J_0=-5m$. The restriction of $(j\circ h)^*e_s$ to $K$, followed by pushout through $V^*\twoheadrightarrow K^*$, is represented by the pullback of the defining extension of $P_s$, whose middle term is $h^*P_s$, which is semistable of degree zero by Lemma~\ref{lem:finite-pullback}.

For the pair $(1,0)$, condition \eqref{eq:invariance} gives $I\subset K$.  If the obstruction vanishes, pushout by $V^*\twoheadrightarrow K^*$ lifts $I$ to a line subsheaf of $h^*P_s$, hence $\mathrm{deg} I\leq0$.

For the pair $(2,1)$, condition \eqref{eq:invariance} gives the same generic line for $J$ and $J_0$.  Both are saturated in $V^*$, so $J=J_0$.  Let $I_K=I\cap K=K(-Z_K)$ and $c=\mathrm{length}(V/I)$.  The injection $K/(I\cap K)\longrightarrow V/I$ gives $\mathrm{deg} Z_K\leq c$. Restriction to $I_K$ and pushout to $K^*=V^*/J_0$ lift $I_K$ to $h^*P_s$. Thus $15m-\mathrm{deg} Z_K\leq0$, and consequently $c\geq15m$.  It follows that $\mathrm{deg} I+\mathrm{deg} J\leq(20m-c)-5m\leq0$.

Consider the pair $(1,1)$.  If $I$ is not generically contained in $K$, then $I\to Q$ is nonzero and the saturation of $J$ is $J_0$.  Therefore $\mathrm{deg} I\leq5m$ and $\mathrm{deg} J\leq-5m$. If $I\subset K$ and $J$ is not generically contained in $J_0$, then $J\to K^*$ is nonzero, and hence $\mathrm{deg} I\leq15m$ and $\mathrm{deg} J\leq-15m$.
Finally, if $I\subset K$ and $J\subset J_0$, the same restriction pushout argument lifts $I$ to $h^*P_s$.  Thus $\mathrm{deg} I\leq0$, while $\mathrm{deg} J\leq0$.  This proves the assertion.
\end{proof}

\begin{proof}[Proof of Theorem~\ref{thm:main}]
Choose $\Sigma$ and $s$ as in Lemmas~\ref{lem:plane-quintic} and
\ref{lem:good-section}, respectively.  The Higgs bundle $\mathcal{E}_s$ is defined
by \eqref{eq:extension} and \eqref{eq:higgs-field}.

Let $C$ be a smooth projective curve and let $f:C\to X$ be a
morphism.  If $f$ is constant, then $df=0$, and the pullback Higgs bundle $f^*\mathcal{E}_s$ is a trivial bundle of degree zero with zero Higgs field. Suppose that $f$ is nonconstant.  By the pointwise injectivity of \eqref{eq:tangent-injection}, $u_f$ is nonzero at the generic point.  If it has generic rank two, semistability follows from Proposition \ref{prop:rank-two-curves}.  If it has generic rank one, Lemma \ref{lem:root-divisor} shows that $f=i_p\circ h$ or $f=j\circ h$ for a finite morphism $h: C\to\Sigma$. The conclusion then follows from Lemma \ref{lem:coordinate} or Lemma \ref{lem:diagonal}.

Thus the pullback Higgs bundle $f^*\mathcal{E}_s$ is semistable for every $f$.  On the other hand, Proposition \ref{prop:chern} gives $\mathrm{det}(E_s)\cong\mathcal{O}_X$ and $\int_Xc_2(E_s)=10$, so $\Delta(E_s)\neq0$ in $H^4(X,\mathbb{Q})$.
\end{proof}

\begin{cor}\label{cor:counterexample}
Bruzzo--Gra\~na Otero's curve semistability conjecture for Higgs bundles is false.  
\end{cor}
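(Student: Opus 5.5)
The corollary follows directly from Theorem~\ref{thm:main}. I would compare its two parts with the two conditions of the Bruzzo--Gra\~na Otero conjecture as stated in the introduction. The plan is to exhibit $X=\Sigma^{(2)}$ and $\mathcal{E}_s=(E_s,\theta_s)$, with $\Sigma$ and $s$ chosen as in Lemmas~\ref{lem:plane-quintic} and~\ref{lem:good-section}. This pair satisfies condition~(2) but not condition~(1), so the implication from (2) to (1) fails.

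First, part~(1) of Theorem~\ref{thm:main} is literally condition~(2): every pullback $f^*\mathcal{E}_s$ along a morphism from a smooth projective curve is semistable. Second, condition~(1) requires $\Delta(E_s)=0$. Since $\mathrm{det}(E_s)\cong\mathcal{O}_X$, we have $c_1(E_s)=0$ and hence $\Delta(E_s)=c_2(E_s)$. Proposition~\ref{prop:chern} gives $\int_X c_2(E_s)=10\neq0$, and $H^4(X,\mathbb{Q})\cong\mathbb{Q}$ is detected by integration, so $\Delta(E_s)\neq0$.

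Condition~(1) therefore fails for every polarization, whatever the semistability status of $\mathcal{E}_s$. As noted after the theorem, $\mathcal{E}_s$ is in fact semistable for every polarization, so the failure comes only from the discriminant. This is the point of the counterexample.

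There is no real obstacle at this stage; all the work is in Theorem~\ref{thm:main}. The only thing to check is that the two conditions are being compared in matching conventions. The pullback Higgs field must be the contracted pullback \eqref{eq:pullback-higgs-field}, which is the one used throughout Section~\ref{sec:curves}. The discriminant must be taken in rational cohomology, where $c_1=0$ makes the normalization irrelevant.
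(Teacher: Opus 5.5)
Your proposal is correct and matches the paper. The paper likewise obtains the corollary immediately from Theorem~\ref{thm:main}: part~(1) of the theorem is condition~(2) of the conjecture, while $\mathrm{det}(E_s)\cong\mathcal{O}_X$ and $\int_X c_2(E_s)=10$ give $\Delta(E_s)=c_2(E_s)\neq0$, so condition~(1) fails for every polarization, and all the real work lies in the proof of the theorem.
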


\bigskip

\noindent\small{\textsc{School of Mathematics, Nanjing University}\\
Nanjing 210093, China}\\
\emph{E-mail address}: \texttt{pfhwangmath@gmail.com}

\bigskip

\end{document}